\newtheorem{Theorem}{Theorem}[section]
\newtheorem{Proposition}[Theorem]{Proposition} 
\newtheorem{Lemma}[Theorem]{Lemma}
\newtheorem{Corollary}[Theorem]{Corollary}
\newtheorem{Conjecture}[Theorem]{Conjecture}
\newcommand{\K}{\mathcal{K}}
\newcommand{\Gr}{\mathcal{G}r}
\newcommand{\D}{\mathrm{D}}
\newcommand{\C}{\mathbb{C}}
\newcommand{\Z}{\mathbb{Z}}
\newcommand{\R}{\mathbb{R}}
\newcommand{\N}{\mathbb{N}}
\newcommand{\val}{\operatorname{val}}
\newcommand{\conv}{\operatorname{conv}}
\newcommand{\tri}{\Delta}
\newcommand{\HIVE}{\mathrm{HIVE}}
\newcommand{\cwl}{\Lambda}
\newcommand{\fund}{\Lambda}
\newcommand{\realt}{\mathfrak{t}_\R}
\newcommand{\MV}{\mathrm{MV}}
\newcommand{\wi}{\mathbf{i}}
\newcommand{\edge}{\longrightarrow}
\newcommand{\Comp}{\operatorname{Comp}}
\theoremstyle{definition}
\begin{document}

\title[Hives and the fibres of the convolution morphism]{Hives and the fibres of the convolution morphism}

\
\author{Joel Kamnitzer}
\email{jkamnitz@aimath.org}
\address{Department of Mathematics\\ UC Berkeley \\ Berkeley, CA }

\begin{abstract}
By the geometric Satake correspondence, the number of components of certain fibres of the affine Grassmannian convolution morphism equals the tensor product multiplicity for representations of the Langlands dual group.  On the other hand, in the case of $GL_n$, combinatorial objects called hives also count tensor product multiplicities.  The purpose of this paper is to give a simple bijection between hives and the components of these fibres.  In particular, we give a description of the individual components.  We also describe a conjectural generalization involving the octahedron recurrence.
\end{abstract}

\date{\today}
\maketitle

\section{Introduction}
\subsection{Tensor product multiplicities and the affine Grassmannian}
Consider the complex reductive group $ G = GL_n $.  Let $ \mathcal{O} = \mathbb{C}[[t]] $ and let $ \mathcal{K} = \mathbb{C}((t)) $.  Let $ \Gr := G(\mathcal{O}) \setminus G(\mathcal{K}) $ denote the \textbf{affine Grassmannian} for $ GL_n$, an ind-scheme over $ \C $.  

The affine Grassmannian is stratified by the $ G(\mathcal{O}) $ orbits $ \Gr_\lambda $ which are labelled by $ \lambda \in \Lambda_+ := \{(\lambda_1, \dots, \lambda_n) : \lambda_1 \ge \dots \ge \lambda_n \} \subset \Z^n $.  Similarly, the $ G(\K) $ orbits on $ \Gr \times \Gr $ are also labelled by $ \Lambda_+ $ and we write $ L_1 \overset{\lambda}{\edge} L_2 $ if $ (L_1,L_2) $ is in the orbit labelled by $ \lambda $.  Let $ L_0 $ denote the identity coset in $ \Gr $.

We can form the \textbf{twisted product} of two $G(\mathcal{O}) $ orbits as 
\begin{equation*}
\Gr_\lambda \widetilde{\times} \Gr_\mu := \{ (L_1, L_2) \in \Gr \times \Gr : L_1 \in \Gr_\lambda \text{ and } L_1 \overset{\mu}{\edge} L_2 \}
\end{equation*}
We have an obvious map $ m_{\lambda \mu} : \Gr_\lambda \widetilde{\times} \Gr_\mu \rightarrow \Gr $ taking $ (L_1, L_2) $ to $ L_2 $.  This map is called the convolution morphism.

The geometric Satake correspondence of Lusztig \cite{L}, Ginzburg \cite{G}, and Mirkovi\'c-Vilonen \cite{MV} is an equivalence between the category of perverse sheaves on $ \Gr $ (constructible with respect to the above stratification) and the category of representations of the Langlands dual group, which in this case is also $ GL_n$.  Under this equivalence, the IC sheaf of $ \overline{\Gr}_\lambda $ corresponds to the irreducible representation $ V_\lambda $ of highest weight $ \lambda $.  Moreover, the push forward under $ m_{\lambda \mu} $ of the IC sheaf of $ \overline{\Gr}_\lambda \widetilde{\times} \overline{\Gr}_\mu $ corresponds to the tensor product $ V_\lambda \otimes V_\mu $.  

As a consequence, the fibres of the convolution morphism record tensor product multiplicities.  
\begin{Theorem}
For all $ \lambda, \mu, \nu \in \Lambda_+$ and any $ L \in \Gr_\nu $ the number of components of $ m_{\lambda \mu}^{-1}(L) $ of dimension $ \langle \rho, \lambda + \mu - \nu \rangle $ equals the tensor product multiplicity of $ V_\nu $ in $ V_\lambda \otimes V_\mu $.
\end{Theorem}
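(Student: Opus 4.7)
The plan is to deduce this from the geometric Satake correspondence together with the semismallness of the convolution morphism.

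First, I would invoke the fundamental geometric input due to Mirkovi\'c--Vilonen: the convolution morphism $ m_{\lambda \mu} : \overline{\Gr}_\lambda \widetilde{\times} \overline{\Gr}_\mu \rightarrow \overline{\Gr}_{\lambda + \mu} $ is proper and semismall. Concretely, for any $ L \in \Gr_\nu $ (where $ \overline{\Gr}_\nu \subseteq \overline{\Gr}_{\lambda + \mu} $), every component of $ m_{\lambda \mu}^{-1}(L) $ has dimension at most $ \langle \rho, \lambda + \mu - \nu \rangle $, which is exactly half the codimension of the stratum $ \Gr_\nu $ inside $ \overline{\Gr}_\lambda \widetilde{\times} \overline{\Gr}_\mu $.

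Second, I would apply the decomposition theorem in its sharpened form for semismall proper maps. Since $ \overline{\Gr}_\lambda \widetilde{\times} \overline{\Gr}_\mu $ admits an IC sheaf and the map is semismall, one obtains a decomposition
\begin{equation*}
(m_{\lambda \mu})_* \mathrm{IC}(\overline{\Gr}_\lambda \widetilde{\times} \overline{\Gr}_\mu) \;\cong\; \bigoplus_{\nu} M_{\lambda \mu}^\nu \otimes \mathrm{IC}(\overline{\Gr}_\nu)
\end{equation*}
as a direct sum (no nontrivial shifts appear, thanks to semismallness), where $ M_{\lambda \mu}^\nu $ is a finite dimensional vector space. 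The key property of semismall maps that I would use here is that $ \dim M_{\lambda \mu}^\nu $ is exactly the number of irreducible components of $ m_{\lambda \mu}^{-1}(L) $ of the maximal allowed dimension $ \langle \rho, \lambda + \mu - \nu \rangle $, for any point $ L \in \Gr_\nu $. This is the combinatorial content of semismallness and follows from a local computation in the stalk at $L$.

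Finally, I would combine this with the geometric Satake correspondence as recalled in the excerpt: the left-hand side corresponds to the representation $ V_\lambda \otimes V_\mu $ of the Langlands dual group, while $ \mathrm{IC}(\overline{\Gr}_\nu) $ corresponds to $ V_\nu $. Hence $ \dim M_{\lambda \mu}^\nu $ equals the tensor product multiplicity of $ V_\nu $ in $ V_\lambda \otimes V_\mu $, yielding the theorem. The main obstacle is the semismallness statement itself, but this is already established in the Mirkovi\'c--Vilonen paper cited in the text; everything else is a formal consequence of the decomposition theorem and Satake.
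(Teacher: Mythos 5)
Your proposal is correct and is exactly the argument the paper is invoking: the paper states this theorem without proof as a consequence of geometric Satake and the displayed fact that $(m_{\lambda\mu})_*$ of the IC sheaf corresponds to $V_\lambda\otimes V_\mu$, and your write-up simply makes explicit the two ingredients (semismallness of $m_{\lambda\mu}$ and the decomposition theorem for semismall proper maps) that turn that statement into a count of top-dimensional fibre components. The only small slip is that the relevant codimension of $\Gr_\nu$ should be measured inside the image $\overline{\Gr}_{\lambda+\mu}$ rather than inside the source $\overline{\Gr}_\lambda\widetilde{\times}\overline{\Gr}_\mu$, though since these have equal dimension the numerical content is unaffected.
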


Here $ \rho = (n-1, n-2, \dots, 0) $.  In the case of $ G = GL_n $, Haines \cite[Prop 1.8]{H} has shown that all components of $ m_{\lambda \mu}^{-1}(L) $ are of this dimension.

Both the fibres of the convolution morphism and the tensor product multiplicity problem admit variants which are more symmetric in $ \lambda, \mu, \nu $.

Let 
\begin{equation*}
\Gr_{\lambda\mu\chi} := \{ ( L_1, L_2, L_3) \in \Gr^3 : L_0 \overset{\lambda}{\edge} L_1 \overset{\mu}{\edge} L_2 \overset{\chi}{\edge} L_3, \text{ and } L_0 = L_3 \}.
\end{equation*}
In this definition $ L_0 $ denotes the identity coset of $ \Gr $.

Now let
\begin{equation*}
c_{\lambda\mu\chi} := \dim (V_\lambda \otimes V_\mu \otimes V_\chi)^G 
\end{equation*}

The following is an easy reformulation of the previous theorem.
\begin{Theorem}
The number of components of $ \Gr_{\lambda\mu\chi} $ of dimension $ \langle \rho, \lambda + \mu + \chi \rangle $ equals $ c_{\lambda\mu\chi} $.
\end{Theorem}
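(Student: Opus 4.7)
The plan is to exhibit $\Gr_{\lambda\mu\chi}$ as a $G(\mathcal{O})$-equivariant fibre bundle over a single $G(\mathcal{O})$-orbit on $\Gr$, and then to deduce the statement from the previous theorem. The condition $L_3 = L_0$ forces $L_3$ to be determined, so a point of $\Gr_{\lambda\mu\chi}$ is a pair $(L_1, L_2) \in \Gr_\lambda \widetilde{\times} \Gr_\mu$ subject to the extra relation $L_2 \overset{\chi}{\edge} L_0$. By $G(\K)$-invariance of relative position, swapping the entries of an orbit labelled $\chi$ produces the orbit labelled $\chi^* := (-\chi_n, \ldots, -\chi_1)$, so the extra relation is equivalent to $L_2 \in \Gr_{\chi^*}$. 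This identifies $\Gr_{\lambda\mu\chi}$ with the preimage $m_{\lambda\mu}^{-1}(\Gr_{\chi^*})$ inside the twisted product.

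Next, observe that $G(\mathcal{O})$, acting diagonally on $\Gr^3$ via its action on $\Gr$ that fixes $L_0$, preserves $\Gr_{\lambda\mu\chi}$ and is equivariant with respect to the projection to $\Gr_{\chi^*}$ sending $(L_1,L_2,L_3) \mapsto L_2$. Since $G(\mathcal{O})$ is transitive on $\Gr_{\chi^*}$, all fibres of this projection are isomorphic to $m_{\lambda\mu}^{-1}(L)$ for a chosen $L \in \Gr_{\chi^*}$. Sweeping a top-dimensional irreducible component $Z$ of $m_{\lambda\mu}^{-1}(L)$ by $G(\mathcal{O})$ yields an irreducible component of $\Gr_{\lambda\mu\chi}$ of dimension $\dim Z + \dim \Gr_{\chi^*} = \langle \rho, \lambda + \mu - \chi^*\rangle + \langle \rho, \chi + \chi^*\rangle = \langle \rho, \lambda + \mu + \chi\rangle$; the middle equality uses that $\rho$ differs from the half-sum of positive roots of $GL_n$ by a central multiple of $(1,\ldots,1)$, which pairs trivially with $\chi + \chi^*$. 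Combined with Haines' equidimensionality result and the previous theorem, the number of top components of the fibre is the multiplicity of $V_{\chi^*}$ in $V_\lambda \otimes V_\mu$, which by the standard duality $\mathrm{Hom}_G(V_{\chi^*}, V_\lambda \otimes V_\mu) \cong (V_\chi \otimes V_\lambda \otimes V_\mu)^G$ equals $c_{\lambda\mu\chi}$.

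The main technical point is confirming that the sweeping map is a bijection, not merely a surjection, between top components of the fibre and top components of the total space. In the abstract setup of a $G$-equivariant fibration over a homogeneous space $G/H$, components of the total space correspond to $H$-orbits on components of the fibre, so one must argue that the stabilizer $H$ of $L$ preserves each top component of $m_{\lambda\mu}^{-1}(L)$ individually. This is where I would invoke the interpretation of these components as a canonical MV-type basis for the multiplicity space $\mathrm{Hom}_G(V_{\chi^*}, V_\lambda \otimes V_\mu)$: being intrinsic to the representation-theoretic side of geometric Satake, the basis must be preserved element by element by any group of symmetries, in particular by $H$. All other steps in the plan are essentially bookkeeping; this component-rigidity under the stabilizer action is the one place where the argument depends on more than the formal fibration structure.
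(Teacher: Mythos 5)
Your plan matches the paper's: the paper also treats $\Gr_{\lambda\mu\chi}$ as a $G(\mathcal{O})$-equivariant fibration over the $G(\mathcal{O})$-orbit $\Gr_\nu$ (with $\nu = \chi^\vee$) whose fibre over $t^\nu$ is $m_{\lambda\mu}^{-1}(t^\nu)$, and then invokes Theorem 1.1 together with connectedness to transfer the component count. Your dimension bookkeeping (including the observation that the ambiguity in $\rho$ is central and pairs to zero with $\chi + \chi^*$) and the duality identifying $\mathrm{Hom}_G(V_{\chi^*}, V_\lambda\otimes V_\mu)$ with $(V_\chi\otimes V_\lambda\otimes V_\mu)^G$ are both correct and are exactly what the paper leaves implicit.

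The one place where you and the paper diverge is the step you yourself flag as the ``main technical point,'' namely showing that the stabilizer $H$ of $t^\nu$ in $G(\mathcal{O})$ preserves each top component of $m_{\lambda\mu}^{-1}(t^\nu)$ individually, so that the map on components is a bijection and not merely a surjection. You propose to deduce this from the canonicity of the MV basis for the multiplicity space, arguing that a canonical basis ``must be preserved element by element by any group of symmetries.'' This is not a valid inference: canonical objects can carry nontrivial permutation actions of their automorphism groups, and the action of $H$ on the MV basis \emph{is} the permutation action on components, which is exactly what is in question — so the argument as stated is circular. The argument that actually works, and which the paper is implicitly using when it writes ``since $G(\mathcal{O})$ is connected,'' is that the stabilizer $H = G(\mathcal{O}) \cap t^{-\nu} G(\mathcal{O}) t^\nu$ is itself a connected (pro-)algebraic group — a parahoric-type subgroup, whose connectedness can be seen directly for $GL_n$ or deduced from the fact that it is the $\mathcal{O}$-points of a smooth connected group scheme. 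A connected group acting algebraically on a variety acts trivially on its (finite, discrete) set of irreducible components, so $\pi_0(H)$ acts trivially and the identification $\Gr_{\lambda\mu\chi} \cong G(\mathcal{O}) \times_H m_{\lambda\mu}^{-1}(t^\nu)$ induces the desired bijection on components. With that replacement your proof is complete and agrees with the paper's.
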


Note that $ \Gr_{\lambda \mu \chi} $ is the variety of geodesic triangles in the Bruhat-Tits building for $ G(\K) $ whose vertices are special, whose side lengths are $ \lambda, \mu, \chi $, and whose first vertex is $ L_0 $.  Such triangles have been studied extensively by Kapovich-Leeb-Millson (see for example \cite{KLM}).

\setlength{\unitlength}{1.3mm} 
\begin{equation*}
\put(-1,0){$L_0$} 
\put(-2,-1){\vector(-1,-1){10}} 
\put(-8,-5){$\lambda$}
\put(12,-11){\vector(-1,1){10}} 
\put(8,-6){$\chi$} 
\put(-15,-14){$L_1$}
\put(-10,-13){\vector(1,0){20}}
\put(12,-14){$L_2$}
\put(0,-16){$\mu$} 
\end{equation*}

\subsection{Hives}
It is a classical problem to give a collection of combinatorial objects of cardinality $c_{\lambda\mu\chi} $.  Many different combinatorial objects can be used; for the purposes of this paper we will consider the hives of Knutson-Tao-Woodward \cite{KTW}, which were inspired by the Berenstein-Zelevinsky triangles \cite{BZ}.

Consider the triangle $\big\{ (i,j,k): i+j+k=n, i,j,k\geq 0 \big\}$.  
This has $\binom{n+2}{2}$ integer points; call this finite set $\tri_n$. 
We will draw it in the plane and put $(n,0,0)$ at the top, 
$(0,n,0)$ at the lower right and $(0,0,n)$ at the lower left.  We will consider the set $ \Z^{\tri_n} $ of integer labelling of these points.
 
We say that $ F \in \Z^{\tri_n}$ satisfies the {\bf hive condition} if:
\begin{equation}
\begin{aligned} \label{rhombin} 
(\text{i})\,\,\qquad F_{i, j, k} + F_{i, j+1 ,k-1} &\ge F_{i+1,j , k-1} + F_{i-1, j+1, k}\,,   \\
(\text{ii})\, \qquad F_{i, j, k} + F_{i+1, j-1, k} &\ge F_{i+1, j, k-1} + F_{i, j-1, k+1}\,,   \\
(\text{iii})  \qquad F_{i, j, k} + F_{i+1, j, k-1} &\ge F_{i, j+1, k-1} + F_{i+1, j-1, k}\,.  \\
\end{aligned}
\end{equation}
 
These inequalities can be interpreted as saying that for any unit rhombus in a hive, the sum across
 the short diagonal is greater than the sum across the long diagonal.  The first two sets of inequalities in (\ref{rhombin}) correspond to horizontally aligned rhombi, while the third set corresponds to vertical rhombi.

A {\bf hive} is an equivalence class of functions satisfying the hive condition, where two functions are considered to be equivalent if their difference is a constant function.  

\setlength{\unitlength}{1.3mm} 
\begin{equation*}
\put(-2,0){\vector(-1,-1){10}} 
\put(-8,-4){$\lambda$}
\put(12,-10){\vector(-1,1){10}} 
\put(8,-5){$\mu$} 
\put(-10,-12){\vector(1,0){20}}
\put(0,-15){$\chi$} 
\put(-1, -1){$a_0$}
\put(-3, -3){$a_1$}
\put(-6, -6){$\cdot$}
\put(-8, -8){$\cdot$}
\put(-11, -11){$a_n$}
\put(3, -5){$\ddots$}
\put(2, -7){$\ddots$}
\put(-6, -11){$\cdots$}
\end{equation*}
 
By adding together rhombus inequalities along the left edge of the hive, we see that 
$(\lambda_1 = a_1 - a_0, \ldots, \lambda_n=a_n-a_{n-1}) $  
  is a weakly decreasing sequence of integers and hence is an element of $ \Lambda_+$.  Similarly, the other two edges give elements $ \mu, \chi \in \Lambda_+$.  We refer to these three sequences as the \textbf{boundary} of the hive. 
 
\begin{Theorem}[\cite{KTW}] \label{th:numhives}
The number of hives with boundary $ \lambda, \mu, \chi $ equals $ c_{\lambda\mu\chi} $.
\end{Theorem}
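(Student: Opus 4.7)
The plan is to reduce Theorem \ref{th:numhives} to the classical theorem of Berenstein--Zelevinsky \cite{BZ} that the number of BZ triangles with boundary $(\lambda,\mu,\chi)$ equals $c_{\lambda\mu\chi}$, by exhibiting an explicit bijection between hives and BZ triangles with matching boundary data.

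First I would pass from a hive class $[F] \in \Z^{\tri_n}/\Z$ to a ``gradient'' function $dF$ on the directed edges of $\tri_n$, defined by taking signed differences of $F$-values at the two endpoints of each edge. The quotient by constant functions is precisely the kernel of $F \mapsto dF$, so this step is injective. Next I would check that the three families of rhombus inequalities in (\ref{rhombin}) translate, under a suitable orientation convention, into non-negativity of certain two-term sums of $dF$-values across small rhombi, and that these are exactly the defining inequalities of a BZ triangle. Simultaneously, the three sides of the hive, from which $\lambda, \mu, \chi$ are read off as successive differences of the boundary $a$-values, should correspond term-by-term to the three boundary weights of the BZ triangle.

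For the inverse direction, given a BZ triangle one integrates the edge data back to a vertex function, which is possible precisely when the signed edges around every small upward and downward triangle in $\tri_n$ sum to zero. This integrability is either built into the BZ axioms or follows directly from them, and once verified it produces a hive that is unique up to the global additive constant. Combining the bijection with the theorem of \cite{BZ} then yields the desired count $c_{\lambda\mu\chi}$.

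The main obstacle is the combinatorial translation in the second step. BZ triangles are traditionally drawn on a dual hexagonal picture with several independent sign conventions, and matching their defining inequalities term-by-term to the three families of horizontal and vertical rhombus inequalities in (\ref{rhombin}) requires a careful choice of edge orientations together with a separate treatment of the contributions coming from upward versus downward small triangles. Once this dictionary is pinned down, both the identification of boundaries and the integrability check reduce to routine bookkeeping, and an alternative bijection with Littlewood--Richardson tableaux can be obtained by reading off successive partitions from the horizontal rows of the hive as an independent cross-check.
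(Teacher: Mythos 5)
The paper does not prove Theorem~\ref{th:numhives}; it is stated as a citation to Knutson--Tao--Woodward \cite{KTW}, whose proof constructs a direct bijection between hives and Littlewood--Richardson tableaux by means of the octahedron recurrence (hence the title of that paper). Your proposal instead reduces the count to the Berenstein--Zelevinsky theorem by matching hives with BZ triangles, which is a genuinely different route through the literature. Both are legitimate: the hive/BZ-triangle correspondence is classical (the introduction even notes that hives were ``inspired by'' BZ triangles), and either reduction yields the stated count. Your approach is shorter in that it leans on \cite{BZ} as a black box, whereas \cite{KTW} is self-contained and yields a manifestly positive rule; neither choice affects anything downstream in this paper, which only uses the numerical equality.

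One place where your sketch is imprecise and should be tightened: the BZ-triangle entries are most naturally the \emph{rhombus numbers} of the hive, i.e., second differences of $F$ (short-diagonal sum minus long-diagonal sum for each unit rhombus), not first differences on edges. With that convention, the three families of rhombus inequalities in (\ref{rhombin}) become literally the non-negativity of the BZ entries, and the ``hexagon conditions'' in the BZ axioms are exactly the relations saying that a collection of rhombus numbers arises as the second differences of some $F$ on $\tri_n$ --- this is precisely your integrability step, made concrete. If you insist on first differences on edges, the rhombus inequalities become comparisons between parallel first differences rather than ``two-term sums,'' and the integrability condition is instead the vanishing of the signed sum around each small triangle, which you would then still need to reconcile with the BZ axioms. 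Either bookkeeping can be made to work, but the rhombus-number formulation is the one in which the dictionary is essentially immediate.
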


\subsection{Statement of the main result} \label{se:mainthm}
By the above theorems, $c_{\lambda \mu \chi} $ is both the number of components of the variety $ \Gr_{\lambda \mu \chi} $ and the number of hives with boundary values $ \lambda, \mu, \chi $.  Moreover both the points of the variety and the hives have a ``triangular appearance''.  So it is tempting to look for a bijection between this set of components and this set of hives.  Such a bijection is the main result of this paper.

Let $ i, j, k \in \{1, \dots, n\} $ with $ i + j + k = n $.  Consider the tensor product $ W_{ijk} := V_{\fund_i} \otimes V_{\fund_j} \otimes V_{\fund_k} $ of fundamental representations of $ GL_n $ (recall that $ V_{\fund_i} = \Lambda^i \C^n $).  We may view $ W_{ijk} $ either as a representation of $ GL_n $ or as a representation of $ GL_n^3 $.  As a $ GL_n $ representation, it contains a unique one dimensional subrepresentation isomorphic to the determinant representation.  Fix a basis vector $ \xi_{ijk} \in W_{ijk} $ of this subrepresentation.

Define a constructible function $ H :  \Gr_{\lambda\mu\chi} \rightarrow \Z^{\tri_n}$ by
\begin{equation*}
H_{ijk}([g_1], [g_2], [g_3]) := \val \big((g_1, g_2, g_3) \cdot \xi_{ijk} \big).
\end{equation*}
where $ \val $ denotes the usual valuation map $ W_{ijk} \otimes \K \rightarrow \Z $.

In general, suppose that $ X $ is a complex algebraic variety, $ Y \subset  X $ is irreducible subvariety and $ f : X \rightarrow S $ is a constructible function, where $ S $ is any set.  Then there exists a dense constructible subset $ U $ of $ Y $ such that $ f $ is constant on $ U $.  In this situation, the value of $ f $ on $ U $ is called the \textbf{generic value} of $ f $ on $ Y $.  

In particular, $ H $ has a generic value on each component of $ \Gr_{\lambda\mu\chi} $.  The following is our main result.
\begin{Theorem} \label{th:main}
The generic values of $ H $ are all hives with boundary values $ \lambda, \mu, \chi $.  The hives corresponding to each component are different.  Hence we get a bijection from the set of components of $ \Gr_{\lambda\mu\chi} $ to the set of hives with boundary values $ \lambda, \mu, \chi $.
\end{Theorem}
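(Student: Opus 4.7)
The plan is to establish Theorem \ref{th:main} by proving three things in sequence: (a) for every irreducible component $Z$ of $\Gr_{\lambda\mu\chi}$, the generic value $H(Z)$ is a hive with boundary $\lambda,\mu,\chi$; (b) distinct components yield distinct hives; and (c) invoke the fact that both sets have cardinality $c_{\lambda\mu\chi}$ (by Theorem \ref{th:numhives} and the fibre-counting theorem above) to conclude that the injection from (b) is automatically a bijection, so in particular every boundary hive is hit.

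First I would make $H_{ijk}$ explicit. Since $V_{\Lambda_i}=\Lambda^i\C^n$, the one-dimensional determinant subrepresentation of $W_{ijk}$ is spanned by a vector of the form
\[
\xi_{ijk}\;=\;\sum_{[n]=I\sqcup J\sqcup K,\ |I|=i,\,|J|=j,\,|K|=k}\varepsilon(I,J,K)\,e_I\otimes e_J\otimes e_K,
\]
so that $H_{ijk}([g_1],[g_2],[g_3])$ is the valuation of a signed sum of triple products of $i\times i$, $j\times j$, $k\times k$ minors of representatives $g_1,g_2,g_3$. With this formula in hand, the boundary computation reduces to bookkeeping: on an edge of $\tri_n$ one of the three indices is zero, the sum collapses to a pairing of minors of only two of the three matrices, and the valuations of such pairings on a generic pair of lattices in a prescribed relative position recover the partial sums of the corresponding edge-label, by the standard description of $G(\mathcal{O})$-orbits on $\Gr\times\Gr$ via valuations of minors.

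The hive inequalities are the technical heart. For each of the three rhombus types in \eqref{rhombin} I would derive the inequality from a Plücker-type identity among the minor expressions together with the subadditivity $\val(A+B)\ge\min(\val A,\val B)$. Concretely, for each unit rhombus one looks for an algebraic relation of the shape
\[
\bigl((g_1,g_2,g_3)\cdot\xi_{i,j,k}\bigr)\otimes\bigl((g_1,g_2,g_3)\cdot\xi_{i+1,j,k-1}\bigr)\;=\;\bigl((g_1,g_2,g_3)\cdot\xi_{i,j+1,k-1}\bigr)\otimes\bigl((g_1,g_2,g_3)\cdot\xi_{i+1,j-1,k}\bigr)+\text{(other terms)}
\]
inside a suitable triple tensor product, and then takes valuations; this gives precisely the short-diagonal $\ge$ long-diagonal inequality. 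These identities are the algebraic lifts of the octahedron recurrence, and pinning down the correct identities with the correct signs is the step I expect to be the main obstacle. If a direct algebraic derivation becomes unwieldy, a useful reformulation is to interpret $H_{ijk}$ lattice-theoretically as measuring how deeply certain wedge products built from $L_1,L_2,L_3$ sit inside a prescribed ambient lattice in $\Lambda^n\K^n$, and to derive the hive inequalities from explicit lattice inclusions.

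For injectivity, I would show that the hive $H(Z)$ reconstructs the generic configuration $(L_1,L_2,L_3)$ up to the appropriate ambiguity. Because $H(Z)$ records in particular the valuations of all minors of each $g_a$, it pins down the Mirkovi\'c--Vilonen cycles for each of the three relative positions $L_0\to L_1$, $L_1\to L_2$ and $L_2\to L_0$; the triangle closure then forces $Z$ itself. Combined with the equal-cardinality fact $|\{\text{components}\}|=|\{\text{hives}\}|=c_{\lambda\mu\chi}$, injectivity immediately upgrades to the full bijection asserted by the theorem.
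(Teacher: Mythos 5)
Your overall strategy---show the generic value is a boundary hive, show the assignment is injective, then upgrade to a bijection by counting---matches the paper's final step. But the route you propose for the two hard parts diverges from the paper's, and in a way that leaves genuine gaps.

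For the hive inequalities, you want algebraic Pl\"ucker-type identities among the $(g_1,g_2,g_3)\cdot\xi_{ijk}$ together with $\val(A+B)\ge\min(\val A,\val B)$. This will at best yield tropical Pl\"ucker-type inequalities, and the paper's own proof of Proposition~\ref{th:isahive} shows these are not enough: two of the three rhombus inequalities also require Proposition~\ref{th:decr}, the monotonicity $M_\gamma\ge M_\delta$ whenever $\gamma\ge\delta$ as $i$-element subsets. That monotonicity is a consequence of the polytope containments $P(M_\bullet)\subset\conv(W\cdot\lambda)$ and $P(M_\bullet)\subset\nu-\conv(W\cdot\mu)$, i.e.\ of the component lying in the convolution fibre and not merely in a GGMS stratum, and its proof is nonelementary, passing through the $\wi$-Kashiwara datum of \cite{K2} (the paper itself flags this step as surprising). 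Neither your algebraic route nor the lattice-theoretic reformulation you sketch has anything playing this role, and without it the rhombus inequalities do not close.

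The paper avoids the technical heart you anticipate by routing through BZ data rather than computing valuations directly on $\Gr_{\lambda\mu\chi}$. It reduces to the fibre $m_{\lambda\mu}^{-1}(t^\nu)$, identifies the closures of its components with MV cycles $\overline{A(M_\bullet)}$ for $M_\bullet\in\MV_{\lambda\mu}^\nu$ (Theorems~\ref{th:tpMVcycle}, \ref{th:BZcycle}), defines $\Phi(M_\bullet)_{ijk}=M_{\{k+1,\dots,k+i\}}+\nu_{k+i+1}+\cdots+\nu_n$, verifies that $\Phi(M_\bullet)$ is a hive purely from BZ-data combinatorics, and then separately computes that $H$ is constant on $A(M_\bullet)\times\{t^\nu\}\times\{L_0\}$ with value $\Phi(M_\bullet)$---a computation that again invokes Proposition~\ref{th:decr} twice to pick out the dominant term in the valuation of $(g_1,t^\nu,1)\cdot\xi_{ijk}$. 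Injectivity is then immediate: the formula for $\Phi$ recovers $M_\gamma$ for all interval subsets $\gamma$, hence the whole BZ datum, hence the component. By contrast, your injectivity argument asserts that $H(Z)$ records the valuations of all minors of each $g_a$; it does not---$H_{ijk}$ records only valuations of signed sums of triple products of minors, and extracting individual minor valuations from those is exactly the analysis you would still owe.
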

In particular, this theorem gives a way of describing individual components of $ \Gr_{\lambda\mu\chi}$.  The component corresponding to a hive $ F $ is the closure of the locus 
\begin{equation*}
\{ (L_1, L_2, L_3) \in \Gr_{\lambda\mu\chi} : H(L_1, L_2, L_3) = F \}.
 \end{equation*}

Though the statement of Theorem \ref{th:main} does not mention MV cycles, the proof of this theorem involves the theory of MV cycles and polytopes as developed by Anderson \cite{A} and the present author \cite{K1, K2}.

The above function $ H $ is closely related to Speyer's function \cite{S}
\begin{align*}
S_{ijk} : GL_n(\C\{t\})^3 &\rightarrow \mathbb{Q} \\
(g_1, g_2, g_3) &\mapsto \val \Big( [x^i y^j z^k] \det \big( x g_1 + y g_2  + z g_3 \big) \Big).
\end{align*}
Here $ \mathbb{C}\{t\} $ is the field of Puiseux series, and $[x^i y^j z^k] $ denotes the extraction of the coefficient of a monomial.  This function $S_{ijk} $ was the inspiration for our function $H_{ijk}$.  The idea of using $ S $ in order to distinguish the components of $ \Gr_{\lambda \mu \chi} $ was suggested to the author by D. Speyer in 2003.

\subsection*{Acknowledgements}
I would first like to thank David Speyer for the above mentioned suggestion and for other helpful conversations.  I would also like to thank Andre Henriques and Allen Knutson for much discussion on the ideas presented here.  Thanks also to Alexander Goncharov, Mikhail Kapovich, Alexander Postnikov, Arun Ram, and Dylan Thurston for interesting conversations.  Finally, I thank the referee for his very careful reading of this paper.  During the course of this work, I was supported by an NSERC graduate fellowship and an AIM postdoctoral fellowship and I enjoyed the hospitality of the MIT and UC Berkeley mathematics departments.

\section{Background}

We begin by clarifying some our notation from the introduction.

Let $ \K = \C((t)) $ denote the field of Laurent series and let $ \mathcal{O} = \C[[t]] $ denote the ring of power series.  We define  the \textbf{affine Grassmannian} to be the left quotient $ \Gr = G(\mathcal{O}) \setminus G(\K) $.  

Note that $ \cwl := \Z^n $ is the coweight lattice of $ GL_n $.  A coweight $ \mu \in \cwl $ gives a homomorphism $ \mathbb{C}^\times \rightarrow T $ and hence an element of $ \Gr$.  We denote the corresponding element $ t^\mu $.  

If $ \lambda \in \Lambda_+ $, let $ \Gr_\lambda = t^\lambda \cdot G(\mathcal{O}) $.  It has dimension $ 2 \langle \lambda, \rho \rangle $ and these are all the $ G(\mathcal{O}) $ orbits on $ \Gr $.  Given $ L_1, L_2 \in \Gr $, we define
\begin{equation*}
L_1 \overset{\lambda}{\edge} L_2 \Longleftrightarrow (L_1, L_2) \in (L_0, t^\lambda) \cdot G(\K) \Longleftrightarrow [g_2 g_1^{-1}] \in \Gr_\lambda \Longleftrightarrow [g_1 g_2^{-1}] \in \Gr_{\lambda^\vee}
\end{equation*}
where $g_1, g_2 $ are any elements of $ G(\K) $ such that $ [g_1] = L_1, [g_2] = L_2 $ and where $ \lambda^\vee = -w_0 \cdot \lambda = (-\lambda_n, \dots, -\lambda_1)$.

\subsection{Functions on $\Gr $ defined by valuation} \label{se:fun}
To continue our clarification, we will now explain why $ H_{ijk} : \Gr^3 \rightarrow \Z $ is a well-defined function.  It is a special case of a more general construction.  Let $ A $ denote a reductive group over $ \C$ and $ V $ be a finite-dimensional representation of $ A $.  

We now consider the vector space $ V \otimes \K $.  This vector space comes with an increasing filtration $$ \cdots \subset V \otimes t \mathcal{O} \subset V \otimes \mathcal{O} \subset V \otimes t^{-1} \mathcal{O} \subset \cdots $$ and hence we can define a map $ \val : V \otimes \K \rightarrow \Z $ by $ \val(u) = k $ if $ u \in U \otimes t^k \mathcal{O} $ and $ u \notin U \otimes t^{k+1} \mathcal{O} $.

The group $ A(\K) $ acts on $ V \otimes \K $ and the action of the subgroup $ A(\mathcal{O}) $ preserves this filtration and hence preserves the valuation of any element. 

Now pick any vector $ v \in V $.  We can regard $ v = v \otimes 1 $ as a element of $ V \otimes \K $.  Define a function $ \D_{V, v} : A(\K) \rightarrow \Z $, by $ \D_{V,v}(g) = \val \big(g \cdot v \big) $.  This function $ \D_{V,v} $ is invariant under left multiplication by $ A(\mathcal{O}) $ since $ A(\mathcal{O}) $ preserves the valuation of any vector.  Hence $ \D_{V,v} $ descends to a constructible function $ \Gr_A := A(\mathcal{O}) \setminus A(\K) \rightarrow \Z $ which we will also denote by $ \D_{V,v}$.  

Moreover, suppose the vector $ v \in V $ is an eigenvector for a subgroup $ B \subset A $.  Then $ \D_{V, v} $ will be invariant under right multiplication by $ B(\mathcal{O}) $.  To see this, let $ \lambda : B \rightarrow \C^\times $ be the eigenvalue of $ v $.  Then if $ h \in B(\mathcal{O}) $, then $ h \cdot v = \lambda_{\mathcal{O}}(h) v $ where $ \lambda_{\mathcal{O}} : B(\mathcal{O}) \rightarrow \mathcal{O}^\times $ is the map obtained from $ \lambda $ by base change.  Since $ \lambda(h) \in \mathcal{O}^\times $ and so does not change the valuation of any element of $ A(\K) $, we see that 
\begin{equation*}
\D_{V,v}([gh]) = \val \big(g h \cdot v\big) = \val(g \cdot \lambda(h) v) = \val\big( \lambda(h) g \cdot v \big) = \val\big(g \cdot v \big) = \D_{V,v}([g]).
\end{equation*}
(A similar argument shows that if $ \lambda = 1$, then $ \D_{V,v} $ is $ B(\K) $ invariant.)
  
In our situation, $ A = GL_n^3,\ V = W_{ijk},\ v = \xi_{ijk} $.  Note that $ \Gr_A = \Gr^3 $.  Hence $ H_{ijk} $ is well-defined.  Finally, the vector $ \xi_{ijk} $ is an eigenvector for the diagonal $ B = GL_n \subset GL_n^3 $ and hence $ H_{ijk} $ is invariant under right multiplication by the diagonal $ GL_n(\mathcal{O}) $.

\subsection{Fibre and the variety of triangles}
Now let $ \nu = \chi^\vee $.  We would like to compare $ m_{\lambda \mu}^{-1}(t^\nu) $ and $ \Gr_{\lambda \mu \chi} $.  Note that the group $ G(\mathcal{O}) $ acts on $ \Gr_{\lambda \mu \chi } $ and that a fundamental domain for this action is $ \{ (L_1, L_2) \in \Gr_{\lambda \mu \chi} : L_2 = t^{\nu} \} = m_{\lambda \mu}^{-1}(t^{\nu}) $.  Since $ G(\mathcal{O}) $ is connected, there is a bijection between the components of $ \Gr_{\lambda \mu \nu} $ and $ m_{\lambda \mu}^{-1}(t^{\nu})$.   

Our function $ H $ is $ G(\mathcal{O}) $ invariant, so the generic value of $ H $ on a component of $ \Gr_{\lambda \mu \chi } $ will be the same as its generic value on the corresponding component of $ m_{\lambda \mu}^{-1}(t^\nu) $.  Hence to prove Theorem \ref{th:main}, it is enough to prove the analogous result where $ H $ is replaced by its restriction to $ m_{\lambda \mu}^{-1}(t^{\nu}) $.  

So our goal will be to study the components of
\begin{equation*}
\begin{aligned}
m_{\lambda \mu}^{-1}(t^\nu) &= \{ L \in \Gr_\lambda : L t^{-\nu} \in \Gr_{\mu^\vee} \} = \Gr_\lambda \cap  \Gr_{\mu^\vee} t^\nu.
\end{aligned}
\end{equation*}

\subsection{MV cycles and polytopes}

Our main tool for studying these components will be the theory of MV cycles and polytopes.  Let $ W $ denote the Weyl group and let $ N $ denote the unipotent radical of a Borel subgroup of $ G $.

For $ w \in W $, let $ N_w = w N w^{-1} $.  For $ w \in W $ and $ \mu \in \cwl $ define the \textbf{semi-infinite cells}
\begin{equation*}
 S_w^\mu := t^\mu \cdot N_w(\K) \subset \Gr. 
\end{equation*}

Let $ \mu_1, \mu_2 $ be coweights with $ \mu_1 \le \mu_2 $.  A component of $ \overline{S_e^{\mu_1} \cap S_{w_0}^{\mu_2}} $ is called an \textbf{MV cycle} of coweight $(\mu_1, \mu_2) $.  

MV cycles are relevant for us since they are the closures of the components $ m^{-1}_{\lambda\mu} (t^{\nu}) $ of the convolution morphism.  The following result is due to Anderson.

\begin{Theorem}[\cite{A}] \label{th:tpMVcycle}
The MV cycles $ A $ of coweight $ (\nu - \mu, \lambda) $ with $ A \subset \overline{\Gr_\lambda} $  and $ A \subset \overline{\Gr_{\mu^\vee}} t^\nu $ are precisely the closures of the top-dimensional components of $ m_{\lambda\mu}^{-1}(t^\nu) $.
\end{Theorem}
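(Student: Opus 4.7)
The plan is to identify the top components of $m_{\lambda\mu}^{-1}(t^\nu)$ with MV cycles of coweight $(\nu-\mu,\lambda)$ by a dimension count on the stratification of the fibre by double semi-infinite cells $S_e^{\mu_1}\cap S_{w_0}^{\mu_2}$. The two key inputs are the Mirković--Vilonen dimension formulas for $\overline{\Gr_\lambda}\cap S_w^\mu$ and Haines's equidimensionality of $m_{\lambda\mu}^{-1}(t^\nu)$ already cited in the introduction.

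Every $L\in\Gr$ lies in a unique intersection $S_e^{\mu_1}\cap S_{w_0}^{\mu_2}$ by the two Iwasawa decompositions, and this intersection has dimension $\langle\rho,\mu_2-\mu_1\rangle$ when $\mu_1\le\mu_2$. Given $L\in m_{\lambda\mu}^{-1}(t^\nu)$ in the $(\mu_1,\mu_2)$ stratum, the condition $L\in\Gr_\lambda$ combined with Mirković--Vilonen forces $\mu_2\le\lambda$. For the condition $L\in\Gr_{\mu^\vee}t^\nu$: since $t^\nu$ normalizes $N(\K)$, right multiplication sends $S_e^\xi$ to $S_e^{\xi+\nu}$, so translating Mirković--Vilonen for $\overline{\Gr_{\mu^\vee}}$ by $t^\nu$ yields $\mu_1\ge w_0\mu^\vee+\nu=\nu-\mu$. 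Hence each stratum of the fibre has dimension at most $\langle\rho,\mu_2-\mu_1\rangle\le\langle\rho,\lambda+\mu-\nu\rangle$, with equality only in the stratum $(\mu_1,\mu_2)=(\nu-\mu,\lambda)$.

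By Haines, every top component $Z$ of $m_{\lambda\mu}^{-1}(t^\nu)$ has dimension exactly $\langle\rho,\lambda+\mu-\nu\rangle$, so $Z$ is generically contained in $S_e^{\nu-\mu}\cap S_{w_0}^\lambda$; its closure is then an irreducible top-dimensional subvariety of $\overline{S_e^{\nu-\mu}\cap S_{w_0}^\lambda}$, i.e., an MV cycle of coweight $(\nu-\mu,\lambda)$, and manifestly lies in $\overline{\Gr_\lambda}\cap\overline{\Gr_{\mu^\vee}}t^\nu$. For the converse, let $A$ be an MV cycle of this coweight contained in $\overline{\Gr_\lambda}\cap\overline{\Gr_{\mu^\vee}}t^\nu$. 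Its generic point lies in $\Gr_{\lambda'}\cap S_{w_0}^\lambda$ for some $\lambda'\le\lambda$; Mirković--Vilonen's non-vanishing criterion forces $\lambda'=\lambda$, and the analogous argument on the translated side places the generic point in $\Gr_{\mu^\vee}t^\nu$. Thus $A\cap m_{\lambda\mu}^{-1}(t^\nu)$ is open dense in $A$, exhibiting $A$ as the closure of a top component.

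The main obstacle is the index bound $\mu_1\ge\nu-\mu$, which requires recasting Mirković--Vilonen for the right translate $\overline{\Gr_{\mu^\vee}}t^\nu$ via the identity $S_e^\xi\cdot t^\nu=S_e^{\xi+\nu}$. Once this is in hand, everything is a clean dimension count: Haines's equidimensionality is essential to pick out the unique stratum that can carry a top component, and the Mirković--Vilonen non-vanishing criterion provides the rigidity needed for the converse direction.
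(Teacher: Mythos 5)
The paper states this theorem as a citation of Anderson's work \cite{A} and does not prove it, so there is no internal proof to compare against; I evaluate your argument on its own terms. Your approach --- stratify $m_{\lambda\mu}^{-1}(t^\nu)=\Gr_\lambda\cap\Gr_{\mu^\vee}t^\nu$ by the opposite semi-infinite cells $S_e^{\mu_1}\cap S_{w_0}^{\mu_2}$, use the identity $S_e^\xi\cdot t^\nu=S_e^{\xi+\nu}$ to transport the Mirkovi\'c--Vilonen nonemptiness and dimension results to the translated closure $\overline{\Gr_{\mu^\vee}}t^\nu$, and conclude by a dimension count that maximal-dimensional components lie generically in the unique stratum $(\nu-\mu,\lambda)$ --- is correct and is in essence Anderson's argument. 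The converse correctly uses the nonemptiness criterion on each side to force the generic point of such an MV cycle into the open pieces $\Gr_\lambda$ and $\Gr_{\mu^\vee}t^\nu$. Two small remarks. First, the appeal to Haines is inessential for the statement as phrased: your own stratification already yields the upper bound $\dim\le\langle\rho,\lambda+\mu-\nu\rangle$, and ``top-dimensional components'' restricts by definition to components of exactly this dimension; Haines's equidimensionality (special to $GL_n$) is only needed if one wants the stronger statement that \emph{every} component is top-dimensional. Second, deducing $(\mu_1,\mu_2)=(\nu-\mu,\lambda)$ from the single equality $\langle\rho,\mu_2-\mu_1\rangle=\langle\rho,\lambda+\mu-\nu\rangle$ together with the dominance-order inequalities $\mu_1\ge\nu-\mu$ and $\mu_2\le\lambda$ deserves the brief justification that $\rho$ pairs strictly positively with any nonzero nonnegative combination of simple coroots.
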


\subsection{BZ data and MV cycles}
We now consider a more explicit description of MV cycles due to the author in \cite{K1}.

Given any collection $ \mu_\bullet = \big( \mu_w \big)_{w \in W} $ of coweights, we can form the \textbf{GGMS stratum}
\begin{equation*} \label{eq:GGMSstratum}
A(\mu_\bullet) := \bigcap_{w \in W} S_w^{\mu_w}.
\end{equation*}

It turns out that every MV cycle is the closure of a GGMS stratum.  To see which closures are MV cycles, we will need a ``dual'' way of looking at these GGMS strata.

Let $ \Gamma = \cup_i W \cdot \fund_i $ be the set of chamber weights.  When $ G = GL_n $, $ W \cdot \fund_i $ can be identified with the set of $i$ element subsets of $ \{1, \dots, n\} $.  So $ \Gamma $ can be identified with the set of proper, non-empty subsets of $ \{1, \dots, n\} $.

Fix a highest weight vector $ v_{\fund_i} $ in each fundamental representation $ V_{\fund_i} $ of $ G $.  For each chamber weight $ \gamma = w \cdot \fund_i $, let $ v_\gamma = \overline{w} \cdot v_{\fund_i} $. Since $ G $ acts on $ V_{\fund_i} $,  $ G(\K) $ acts on $ V_{\fund_i} \otimes \K $.

For each $ \gamma \in \Gamma $ define the function $ \D_\gamma $ by:
\begin{equation} \label{eq:Vdef}
\begin{aligned} 
\D_\gamma: \Gr & \rightarrow \Z \\
[g] &\mapsto \val ( g \cdot v_\gamma)
\end{aligned}
\end{equation}
So $ \D_\gamma = \D_{V_{\fund_i}, v_\gamma} $ in the notation of section \ref{se:fun}.

The functions $ \D_\gamma $ have a simple structure with respect to the semi-infinite cells.  To see this note that if $ \gamma = w \cdot \fund_i $, then $ v_\gamma $ is invariant under $ N_w $.  This immediately implies the following lemma.
\begin{Lemma}[\cite{K1}] \label{th:SmDg}
Let $ w \in W $.
The function $ \D_{w \cdot \fund_i} $ takes the constant value $ \langle \mu, w \cdot \fund_i \rangle $ on $ S_w^\mu $.  In fact,
\begin{equation*}
S_w^\mu = \{ L \in \Gr : \D_{w \cdot \fund_i} (L) = \langle \mu, w \cdot \fund_i \rangle \textrm{ for all } i \}.
\end{equation*}
\end{Lemma}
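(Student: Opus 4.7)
The plan. The lemma follows directly from two properties of the vector $v_\gamma = \overline{w} \cdot v_{\fund_i}$, where $\gamma = w \cdot \fund_i$ and $\overline{w}$ is any lift of $w$ to $N_G(T)$: (a) $v_\gamma$ is a weight vector of weight $\gamma$; (b) $v_\gamma$ is invariant under $N_w$. Property (a) is immediate from the fact that $v_{\fund_i}$ has weight $\fund_i$ and that conjugation by $\overline{w}$ translates $T$-weights by $w$. Property (b) follows because $v_{\fund_i}$ is $N$-invariant (being a highest weight vector), so its image under $\overline{w}$ is invariant under $\overline{w} N \overline{w}^{-1} = N_w$.

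For the first assertion, let $L \in S_w^\mu$, so that $L = [t^\mu n]$ for some $n \in N_w(\K)$. Extending (b) to $\K$-points we have $n \cdot v_\gamma = v_\gamma$, and extending (a) we have $t^\mu \cdot v_\gamma = t^{\langle \mu, \gamma \rangle} v_\gamma$ in $V_{\fund_i} \otimes \K$. Since $v_\gamma \in V_{\fund_i}$ has valuation $0$, we obtain
\[
\D_\gamma(L) \;=\; \val\bigl(t^\mu n \cdot v_\gamma\bigr) \;=\; \val\bigl(t^{\langle \mu, \gamma \rangle} v_\gamma\bigr) \;=\; \langle \mu, \gamma \rangle.
\]

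For the ``in fact'' clause, recall that the Iwasawa decomposition partitions the affine Grassmannian as $\Gr = \bigsqcup_{\mu' \in \cwl} S_w^{\mu'}$. Thus every $L \in \Gr$ lies in a unique cell $S_w^{\mu'}$, and by the part already proved, $\D_{w \cdot \fund_i}(L) = \langle \mu', w \cdot \fund_i \rangle$ for all $i$. Since the fundamental weights $\fund_i$ (for $GL_n$, including $\fund_n$) span the weight lattice, so do their $w$-translates; hence the pairings $\langle \,\cdot\,, w \cdot \fund_i \rangle$ separate coweights. Consequently, if all of these values agree with $\langle \mu, w \cdot \fund_i \rangle$, then $\mu' = \mu$ and $L \in S_w^\mu$.

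There is essentially no serious obstacle: as the text itself indicates (``This immediately implies the following lemma''), the result reduces to the $N_w$-invariance and weight homogeneity of $v_\gamma$ plus the standard partition of $\Gr$ into semi-infinite cells. The only minor point requiring attention is having enough chamber weights to separate coweights — for $GL_n$ this means tacitly including the determinant direction $\fund_n$, or else interpreting the second statement modulo the center.
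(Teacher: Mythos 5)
Your proof is correct, and it follows exactly the route the paper itself hints at: the lemma is only cited from \cite{K1}, with the preceding sentence observing that $v_{w\cdot\fund_i}$ is $N_w$-invariant and that ``this immediately implies'' the lemma. You supply precisely the two facts that make that implication precise — $N_w$-invariance and the weight $\gamma$ of $v_\gamma$ — to get the constant value $\langle\mu,\gamma\rangle$ on $S_w^\mu$, and then use the Iwasawa partition $\Gr=\bigsqcup_{\mu'}S_w^{\mu'}$ to promote the constancy statement to the exact level-set description. Your final remark about needing $\fund_n$ (the determinant) in the $GL_n$ case so that the pairings $\langle\,\cdot\,,w\cdot\fund_i\rangle$ actually separate coweights is a genuine subtlety that the paper's terse citation glosses over; it is a worthwhile clarification rather than a defect in your argument.
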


Let $ M_\bullet $ be a collection of integers.  Then we consider the joint level set of the functions $ \D_\bullet$,
\begin{equation}
A(M_\bullet) := \{ L \in Gr : \D_\gamma(L) = M_\gamma \text{ for all } \gamma \}.
\end{equation}

Lemma \ref{th:SmDg} shows that if $ \mu_\bullet $ is related to $ M_\bullet $ by 
\begin{equation} \label{eq:mufromM}
M_{w \cdot \fund_i} = \langle \mu_w, w \cdot \fund_i \rangle,
\end{equation}
then $ A(\mu_\bullet) = A(M_\bullet)$. 

It is fairly easy to see (\cite{K1}) that this GGMS stratum $ A(M_\bullet) $ will be empty unless the following \textbf{edge inequalities} hold: for each $ w \in W $ and $ i \in I $,
\begin{equation} \label{eq:nondeg}
M_{ws_i \cdot \fund_i} + M_{w \cdot \fund_i} + \sum_{j \ne i} a_{j i} M_{w \cdot \fund_j} \le 0.
\end{equation}

Let $ w \in W, i,j \in I $ be such that $ w s_i > w, ws_j > w $, and $ i \ne j$.  We say that a collection $ \big( M_\gamma \big)_{\gamma \in \Gamma} $ satisfies the \textbf{tropical Pl\"ucker relation} at $ (w, i,j) $ if $a_{ij} = 0 $ or if $ a_{ij} = a_{ji} = -1 $ and
\begin{equation} \label{eq:A2trop}
M_{ws_i \cdot \fund_i} + M_{w s_j \cdot \fund_j} = \min(M_{w \cdot \fund_i} + M_{w s_i s_j \cdot \fund_j} , M_{w s_j s_i \cdot \fund_i} + M_{w \cdot \fund_j} ) .  
\end{equation}

We say that a collection $ M_\bullet = \big( M_\gamma \big)_{\gamma \in \Gamma} $ satisfies the \textbf{tropical Pl\"ucker relations} if it satisfies the tropical Pl\"ucker relation at each $ (w, i, j) $. 

The main result of \cite{K1} is that these tropical Pl\"ucker relations characterize the MV cycles.

A collection $ M_\bullet $ of integers is called a \textbf{BZ datum} of coweight $ (\mu_1, \mu_2) $ if:
\begin{enumerate}
\item $ M_\bullet $ satisfies the tropical Pl\"ucker relations.
\item $ M_\bullet $ satisfies the edge inequalities (\ref{eq:nondeg}).
\item If $ \mu_\bullet $ is the GGMS datum of $ P(M_\bullet) $, then $ \mu_e = \mu_1 $ and $\mu_{w_0} = \mu_2 $.
\end{enumerate}

\begin{Theorem}[\cite{K1}] \label{th:BZcycle}
Let $ M_\bullet $ be a BZ datum of coweight $(\mu_1, \mu_2) $.  Then $ \overline{A(M_\bullet)} $ is an MV cycle of coweight $(\mu_1, \mu_2) $.  Moreover, all MV cycles arise this way.   

In particular, if $ A $ is an MV cycle and $ M_\gamma $ is the generic value of $ \D_\gamma $ on A, then $ M_\bullet $ is a BZ datum.
\end{Theorem}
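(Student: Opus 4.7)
The plan is to establish a bijection between MV cycles of coweight $(\mu_1, \mu_2)$ and BZ data of the same coweight via $A \mapsto M_\bullet$, where $M_\gamma$ is the generic value of the constructible function $\D_\gamma$ on the irreducible variety $A$, with inverse $M_\bullet \mapsto \overline{A(M_\bullet)}$. By Lemma \ref{th:SmDg} and the relation (\ref{eq:mufromM}), the level set $A(M_\bullet)$ coincides with the GGMS stratum $A(\mu_\bullet)$, so the task decomposes into: (a) verifying that generic values on an MV cycle satisfy the three BZ axioms; (b) showing that for any BZ datum, $\overline{A(M_\bullet)}$ is an irreducible component of $\overline{S_e^{\mu_1} \cap S_{w_0}^{\mu_2}}$ of the top dimension $\langle \rho, \mu_2 - \mu_1\rangle$; and (c) checking mutual inverseness, which follows from density of GGMS strata in their closures. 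The ``in particular'' clause in the theorem is precisely the content of (a).

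Of the three BZ axioms in (a), the coweight conditions on $\mu_e$ and $\mu_{w_0}$ are immediate from Lemma \ref{th:SmDg} together with the definition of an MV cycle, while the edge inequalities (\ref{eq:nondeg}) reflect standard dimension bounds on how semi-infinite cells slice $\overline{\Gr_\lambda}$. The heart of the argument is the tropical Pl\"ucker relation (\ref{eq:A2trop}). I would prove it by a local reduction: the six chamber weights appearing in (\ref{eq:A2trop}) all live in a single orbit of the Levi subgroup $L_{ij}$ generated by $T$ and the root subgroups for $\pm\alpha_i, \pm\alpha_j$. For $a_{ij} = 0$ the relation is vacuous; for $a_{ij} = a_{ji} = -1$ the Levi is isogenous to $SL_3$, and the embedding $\Gr_{L_{ij}} \hookrightarrow \Gr_G$ reduces the problem to an explicit computation in $\Gr_{SL_3}$. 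There the six $\D_\gamma$ values amount to valuations of the six natural $1\times 1$ and $2\times 2$ minors of an element of $SL_3(\K)$, and the tropical Pl\"ucker relation becomes the tropicalization of the classical Pl\"ucker identity $p_1 p_{23} - p_2 p_{13} + p_3 p_{12} = 0$, forcing the minimum of two of these sums to equal the third.

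For direction (b), I would construct $\overline{A(M_\bullet)}$ inductively along a reduced decomposition $w_0 = s_{i_1} \cdots s_{i_N}$, intersecting semi-infinite cells in Bruhat order and using the tropical Pl\"ucker relations at each step to control the new intersection. The main obstacle is making this induction precise: one must show the tropical Pl\"ucker relations are not only necessary (from (a)) but also sufficient to guarantee irreducibility and the correct codimension at every stage, so that the final variety is genuinely a top-dimensional component of $\overline{S_e^{\mu_1} \cap S_{w_0}^{\mu_2}}$ rather than a smaller piece. This requires exhibiting a dense open locus of $A(M_\bullet)$ on which $\D_\gamma$ takes the value $M_\gamma$ for every $\gamma$; the key input is again the $A_2$ local picture, which ensures that both terms in the minimum in (\ref{eq:A2trop}) can actually be realized by MV cycles and that their union exhausts the possibilities at each branching.
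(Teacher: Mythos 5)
This theorem is stated with a citation to \cite{K1}; the paper under review does not prove it, so there is no proof here to compare against. What I can do is assess your sketch against what is actually done in that reference.

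Your part (a) is in the right spirit. The reduction of the tropical Pl\"ucker relation (\ref{eq:A2trop}) to a rank-two Levi and ultimately to an explicit $SL_3$ (or $GL_3$) computation with minors is indeed how these relations are first discovered, and the observation that (\ref{eq:A2trop}) is the tropicalization of a three-term Pl\"ucker identity is correct in outline. But there is a gap you gloss over: on a generic point of the MV cycle one must rule out the ``unexpected cancellation'' in which the two terms competing in the valuation inequality both have strictly smaller valuation than their sum. Establishing this genericity is not automatic and is where the real work lies. Your claim that the edge inequalities ``reflect standard dimension bounds'' is also loose; they are non-emptiness constraints on the GGMS stratum and follow from the fact that $S_w^\mu \cap S_{ws_i}^{\mu'} = \emptyset$ unless $\mu' \le_{ws_i} \mu$ in the appropriate twisted order, not from slicing $\overline{\Gr_\lambda}$.

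Part (b) is where your plan and the actual argument in \cite{K1} diverge most. You propose to build $\overline{A(M_\bullet)}$ by intersecting semi-infinite cells one at a time along a reduced word for $w_0$ and control dimension inductively. The proof in \cite{K1} does not proceed this way. Instead it leans on the prior results of Anderson and Mirkovi\'c--Vilonen: for a \emph{single fixed} reduced word $\wi$, GGMS strata are already known to be parametrized by $\wi$-Lusztig data, and their closures are MV cycles. The content of the theorem is combinatorial compatibility: that the numbers $M_\gamma$ obtained from \emph{all} reduced words glue into a single BZ datum satisfying the tropical Pl\"ucker relations, and conversely that a BZ datum determines a consistent Lusztig datum for every reduced word. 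That equivalence is established using Berenstein--Zelevinsky's transition maps (braid move formulas) between Lusztig parametrizations, i.e.\ crystal-theoretic and Lie-combinatorial input, not an inductive construction of the variety from scratch. Your approach, if pursued literally, would essentially have to reprove those transition formulas geometrically at each braid move, which is a considerably harder route and is not how the cited result is obtained.

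So: a reasonable heuristic sketch, with the $SL_3$ local picture correctly identified as the source of the tropical Pl\"ucker relations, but part (b) is under-specified in a way that would require a genuinely different and harder argument than the one in \cite{K1}, and the paper itself simply takes the theorem as a black box.
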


\subsection{MV polytopes}
There is another combinatorial object to mention at this point.  If $ \overline{A(\mu_\bullet)}$ is an MV cycle of coweight $ (\mu_1, \mu_2) $ (by the above theorem and previous remarks, all are of this form), then the convex hull $ \conv \big(\mu_\bullet \big) $ is called an \textbf{MV polytope} of coweight $(\mu_1, \mu_2) $.  The above considerations show that if $ \mu_\bullet $ and $ M_\bullet $ are related as in (\ref{eq:mufromM}), then the polytope is defined by inequalities involving the $ M_\gamma $,
\begin{equation*}
\conv \big( \mu_\bullet \big) = P(M_\bullet) := \{ \alpha \in \realt : \langle \alpha, \gamma \rangle \ge M_\gamma \text{ for all }  \gamma \}.
\end{equation*}
Moreover, this is a convex polytope with vertices $\mu_\bullet $.

So the MV polytope retains all of the information of the MV cycle and thus we have a bijection from MV cycles to MV polytopes.  The following useful lemma due to Anderson \cite{A} shows which MV cycles lie in the fibre of the convolution morphism.

\begin{Lemma}
Let $ \lambda, \mu, \nu \in \Lambda_+ $.  Let $ A $ be an MV cycle of coweight $ (\nu - \mu, \lambda) $ and $ P $ the associated MV polytope.  

Then $A \subset \overline{\Gr_\lambda}$ if and only if $ P \subset \conv \big( W \cdot \lambda \big)$ and $ A \subset \overline{\Gr_{\mu^\vee}} t^\nu $ if and only if $ P \subset \nu - \conv \big(W \cdot \mu \big)$.
\end{Lemma}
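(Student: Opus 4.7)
The two equivalences are linked by the right translation $L \mapsto L t^{-\nu}$. Since $t^\nu$ normalizes each $N_w$, this is an automorphism of $\Gr$ sending $S_w^\mu$ to $S_w^{\mu-\nu}$, hence taking MV cycles to MV cycles and shifting their polytopes by $-\nu$. Combined with the identity $\conv(W\mu^\vee) = -\conv(W\mu)$, the second equivalence reduces to the first applied to $\mu^\vee$. So I focus on the first equivalence, $A \subset \overline{\Gr_\lambda} \Leftrightarrow P \subset \conv(W\lambda)$.

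For the implication $(\Rightarrow)$, the $T$-fixed points of the MV cycle $A$ are precisely the points $t^{\mu_w}$ indexed by the vertices $\mu_w$ of $P$. If $A \subset \overline{\Gr_\lambda}$, each such $t^{\mu_w}$ lies in $\overline{\Gr_\lambda}$. Combining the orbit decomposition of $\overline{\Gr_\lambda}$ into $\Gr_{\lambda'}$ for $\lambda' \le \lambda$ dominant with the fact that $t^\mu \in \Gr_{\lambda'}$ iff $\mu \in W\lambda'$, this amounts to each $\mu_w$ lying in $\conv(W\lambda)$ (whose lattice points are precisely the weights of $V_\lambda$). Hence $P = \conv(\mu_\bullet) \subset \conv(W\lambda)$.

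For $(\Leftarrow)$, I would first establish the valuation characterization $L \in \overline{\Gr_\lambda}$ iff $\D_\gamma(L) \ge \min_{w \in W}\langle w\lambda, \gamma\rangle$ for every chamber weight $\gamma$ (with the appropriate determinant constraint, which is automatic from the MV coweight data). The inclusion $\subset$ is a direct valuation estimate: for $L = [t^{\lambda'}h]$ with $\lambda' \le \lambda$ dominant and $h \in G(\mathcal{O})$, decomposing $hv_\gamma = \sum_\eta c_\eta$ over weights of $V_{\fund_i}$ (with $c_\eta \in V_{\fund_i}[\eta] \otimes \mathcal{O}$) yields $\val(t^{\lambda'}hv_\gamma) \ge \min_\eta \langle \lambda', \eta\rangle \ge \min_w\langle w\lambda, \gamma\rangle$, where the last inequality uses $\conv(W\lambda') \subset \conv(W\lambda)$. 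The reverse inclusion is the delicate step: for $L \in \Gr_{\lambda''}$, expanding $\D_{\fund_i}(L)$ via the $i \times i$ minors of $h$ and exploiting the unit-determinant constraint on $h \in G(\mathcal{O})$ forces $\lambda'' \le \lambda$ whenever all the $\D_\gamma$-inequalities hold.

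Given this characterization, the polytope condition $P \subset \conv(W\lambda)$ is equivalent to $M_\gamma \ge \min_w\langle w\lambda, \gamma\rangle$ for every chamber weight $\gamma$, since the BZ datum $M_\gamma$ is the tight facet value $\min_{\alpha \in P}\langle \alpha, \gamma\rangle$ of the MV polytope. By upper semicontinuity of $\D_\gamma$ (the locus $\{L : \D_\gamma(L) \ge k\}$ is closed), we have $\D_\gamma(L) \ge M_\gamma \ge \min_w\langle w\lambda, \gamma\rangle$ throughout the irreducible $A$, giving $A \subset \overline{\Gr_\lambda}$ by the characterization. The main obstacle is the reverse inclusion of the $\D_\gamma$-characterization of $\overline{\Gr_\lambda}$: a careful minor analysis in $GL_n$ leveraging the unit-determinant condition on $G(\mathcal{O})$-elements is needed to rule out non-dominant $\lambda''$.
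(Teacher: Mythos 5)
The paper does not give its own proof of this Lemma: it is quoted verbatim as a result of Anderson and the proof is deferred to \cite{A}. So there is no in-text argument to compare against, and your proposal must be judged on its own.

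Your overall strategy is sound, and both implications rest on the right ideas. The reduction of the second equivalence to the first via $L \mapsto Lt^{-\nu}$ works, but you should be explicit that what you then need is the first equivalence for an \emph{arbitrary} dominant $\kappa$, not only for $\kappa = \mu_{w_0}(P)$: the translated cycle $At^{-\nu}$ has $\mu_{w_0} = \lambda - \nu$, which is generally not $\mu^\vee$. Fortunately your proof of the first equivalence never uses the hypothesis $\mu_{w_0}(P) = \lambda$, only the determinant normalization $\langle \lambda, \fund_n\rangle = \langle \mu_{w_0}(P), \fund_n\rangle$, and after translation that becomes $\langle \mu^\vee, \fund_n\rangle = \langle \lambda - \nu, \fund_n\rangle$, which holds precisely because $\lambda + \mu - \nu$ lies in the root lattice (forced by $\nu - \mu \le \lambda$). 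This needs to be said, or the reduction is only apparent. Your $(\Rightarrow)$ argument also silently uses that each $t^{\mu_w}$ lies in the MV cycle $A$ (the ``precisely'' is an overclaim and also unnecessary); this is true and follows from the attracting-set description of $S_w^{\mu_w}$ under a $w$-antidominant one-parameter subgroup together with $T$-invariance of the GGMS stratum, but it deserves at least a citation.

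The real gap is in $(\Leftarrow)$, and you flag it yourself: you assert but do not prove the valuation characterization $L \in \overline{\Gr_\lambda} \iff \D_\gamma(L) \ge \min_w\langle w\lambda,\gamma\rangle$ (with the determinant constraint). The easy inclusion you do correctly. For the reverse, the needed argument is concrete for $GL_n$: if $L \in \Gr_{\lambda''}$ with $\lambda''$ dominant, write a representative $g = a t^{\lambda''} b$ with $a, b \in G(\mathcal{O})$; then left multiplication by $a$ does not change valuations, and $\D_\gamma(L) = \min_\delta\bigl(\val(b_{\delta,\gamma}) + \langle \lambda'',\delta\rangle\bigr)$ where $b_{\delta,\gamma}$ is the minor of $b$ on rows $\delta$ and columns $\gamma$. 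Since $b \bmod t \in GL_n(\C)$, its bottom $i$ rows have full rank, so for $\delta^\ast = \{n-i+1,\dots,n\}$ there is some $\gamma$ with $b_{\delta^\ast,\gamma}$ a unit, and for that $\gamma$ the minimum is attained at $\delta^\ast$; hence some $\gamma$ of size $i$ realizes $\D_\gamma(L) = \lambda''_{n-i+1}+\cdots+\lambda''_n$. Feeding this into your inequalities, together with the determinant equality, yields $\lambda'' \le \lambda$. Without this lemma the implication $(\Leftarrow)$ does not go through, so as written the proposal is an incomplete proof; the missing step is, however, filled by the standard minor argument just indicated, and your semicontinuity reduction to the $M_\gamma$'s is the right way to use it.
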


Combining this lemma with Theorems \ref{th:tpMVcycle}, \ref{th:BZcycle}, we see that the closures of the components of $ m^{-1}_{\lambda \mu}(t^\nu) $ are of the form $ \overline{A(M_\bullet)} $  for $ M_\bullet $ from the following set of BZ data:
\begin{equation} \label{eq:defMV}
\begin{aligned}
\MV_{\lambda\mu}^\nu := \{ M_\bullet \text{ a BZ datum } : \
 &M_{w_0 \cdot \fund_i} = \langle \lambda, w_0 \cdot \fund_i \rangle \text{ for all } i, \\
 &M_{\fund_i} = \langle \nu - \mu, \fund_i \rangle \text{ for all } i, \\
&P(M_\bullet) \subset \conv \big( W \cdot \lambda \big) \\
&P(M_\bullet) \subset \nu - \conv \big(W \cdot \mu \big) \}
\end{aligned}
\end{equation}

\begin{Corollary} \label{th:numMV}
$|MV_{\lambda \mu}^\nu| = c_{\lambda\mu\chi}$.
\end{Corollary}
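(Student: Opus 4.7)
The plan is to assemble this corollary as a direct consequence of the results imported in the Background section, with essentially no new content beyond careful bookkeeping of dualities. First I would rewrite the right hand side: by the standard identity $(V_\chi)^* \cong V_{\chi^\vee}$, we have
\begin{equation*}
c_{\lambda\mu\chi} = \dim(V_\lambda \otimes V_\mu \otimes V_\chi)^G = \dim \operatorname{Hom}_G(V_{\chi^\vee}, V_\lambda \otimes V_\mu),
\end{equation*}
which is the multiplicity of $V_\nu$ in $V_\lambda \otimes V_\mu$ under the convention $\nu = \chi^\vee$ from the subsection on the fibre and the variety of triangles. By the first Theorem of the introduction together with Haines' result that every component of $m_{\lambda\mu}^{-1}(t^\nu)$ already has the top dimension $\langle \rho,\lambda+\mu-\nu\rangle$, this multiplicity equals the total number of irreducible components of $m_{\lambda\mu}^{-1}(t^\nu)$.

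Next I would translate these components into the language of BZ data in two steps. Theorem \ref{th:tpMVcycle} identifies the closures of those components with the MV cycles of coweight $(\nu-\mu,\lambda)$ that are contained in $\overline{\Gr_\lambda}$ and in $\overline{\Gr_{\mu^\vee}}\,t^\nu$. Theorem \ref{th:BZcycle} then exhibits a bijection from MV cycles of coweight $(\mu_1,\mu_2)$ to BZ data of coweight $(\mu_1,\mu_2)$, sending $A$ to the collection of generic values $M_\gamma$ of the functions $\D_\gamma$ on $A$. Combining these, the top components of $m_{\lambda\mu}^{-1}(t^\nu)$ are parametrized by BZ data $M_\bullet$ of coweight $(\nu-\mu,\lambda)$ subject to two geometric constraints.

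I then need to match those constraints against the four conditions in the definition (\ref{eq:defMV}) of $\MV_{\lambda\mu}^\nu$. Using relation (\ref{eq:mufromM}) and Lemma \ref{th:SmDg}, the coweight condition $\mu_e = \nu - \mu$ reads $M_{\fund_i} = \langle \nu - \mu, \fund_i\rangle$ for all $i$, and $\mu_{w_0} = \lambda$ reads $M_{w_0\cdot\fund_i} = \langle\lambda, w_0\cdot\fund_i\rangle$ for all $i$, producing the first two lines of (\ref{eq:defMV}). The remaining two polytope containments $P(M_\bullet)\subset \conv(W\cdot\lambda)$ and $P(M_\bullet)\subset \nu-\conv(W\cdot\mu)$ are exactly the content of the Anderson lemma immediately preceding the Corollary, applied to the MV cycle $\overline{A(M_\bullet)}$. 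Thus the set of BZ data arising from top components of $m_{\lambda\mu}^{-1}(t^\nu)$ coincides with $\MV_{\lambda\mu}^\nu$, and cardinalities agree.

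The only real obstacle is the bookkeeping: tracking the three-way duality between $\chi$, $\nu = \chi^\vee$, and $\mu^\vee$, and confirming that the coweight pair $(\nu-\mu,\lambda)$ built into $\MV_{\lambda\mu}^\nu$ is the same pair produced by Anderson's Theorem \ref{th:tpMVcycle} for the fibre $m_{\lambda\mu}^{-1}(t^\nu)$. No further geometric input is required; once the boundary and polytope conditions are lined up, the corollary follows by transitivity of the bijections.
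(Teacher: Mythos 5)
Your proof is correct and follows essentially the same route as the paper: the paper states this corollary without explicit proof, taking it as a direct consequence of combining Theorem 1.1 with Haines' dimension result, then Theorems \ref{th:tpMVcycle} and \ref{th:BZcycle} together with the Anderson lemma to identify the top components of $m_{\lambda\mu}^{-1}(t^\nu)$ with the BZ data in $\MV_{\lambda\mu}^\nu$. Your write-up simply makes the bookkeeping (the duality $\nu = \chi^\vee$, the translation of coweight conditions via (\ref{eq:mufromM}), and the matching of polytope containments) explicit.
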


The first and second conditions of (\ref{eq:defMV}) are equivalent to $ P(M_\bullet) $ having coweight $ (\nu - \mu, \lambda) $.

In turns out that the third and fourth conditions of (\ref{eq:defMV}) are difficult to use, even though they can be written out as a sequence of inequalities (see \cite{K1}).  Instead we will use the following consequences.

\begin{Lemma} 
Let $ P(M_\bullet) $ be an MV polytope of coweight $ (\mu, \lambda) $ such that $ P(M_\bullet) \subset \conv \big( W \cdot \lambda \big)$ for some $ \lambda \in \Lambda_+,\ \mu \in \Lambda$. 

Let $ w \in W,\ i \in I $ be such that $ l(ws_i) > l(w) $.  Then $ M_{w \cdot \fund_i} \ge M_{w s_i \cdot \fund_i}$.
\end{Lemma}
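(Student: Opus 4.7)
The plan is to translate the inequality into a geometric condition on a single edge of the MV polytope, then verify the condition using the containment hypothesis.

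First, by Lemma~\ref{th:SmDg}, both sides can be written as pairings against MV polytope vertices: $M_{w\cdot\fund_i} = \langle \mu_w, w\fund_i \rangle$ and $M_{ws_i\cdot\fund_i} = \langle \mu_{ws_i}, ws_i\fund_i \rangle$. Since $l(ws_i)>l(w)$, the MV polytope structure forces $\mu_{ws_i} - \mu_w = c\, w\alpha_i^\vee$ for some integer $c \ge 0$. Using $\alpha_i = \fund_i - s_i\fund_i$ together with $\langle\alpha_i^\vee,\fund_i\rangle = 1$ and $\langle\alpha_i^\vee,\alpha_i\rangle = 2$, a short calculation gives
\[
M_{w\cdot\fund_i} - M_{ws_i\cdot\fund_i} = c + \langle\mu_w, w\alpha_i\rangle.
\]
This reduces the lemma to showing $\langle\mu_w, w\alpha_i\rangle + c \ge 0$, equivalently that the midpoint $m=(\mu_w+\mu_{ws_i})/2$ of the edge satisfies $\langle m, w\alpha_i\rangle \ge 0$.

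For this non-negativity, I will use the hypothesis $P(M_\bullet)\subset\conv(W\cdot\lambda)$ together with $\mu_{w_0}=\lambda$. Since $w\alpha_i$ is a positive root (as $l(ws_i)>l(w)$) and $\lambda$ is dominant, $\langle\lambda, w\alpha_i\rangle \ge 0$. The reflection $s_{w\alpha_i}\in W$ preserves $\conv(W\cdot\lambda)$, so the intersection of $\conv(W\cdot\lambda)$ with the line through $\mu_w$ and $\mu_{ws_i}$ in direction $w\alpha_i^\vee$ is a segment symmetric under $s_{w\alpha_i}$. The MV edge $[\mu_w,\mu_{ws_i}]$ lies inside this symmetric segment, and the additional constraint that $\mu_{w_0}=\lambda$ anchors the dominant vertex of the polytope to a dominant point of $\conv(W\cdot\lambda)$; together these should force the midpoint $m$ onto the non-negative side of $\{\langle\cdot, w\alpha_i\rangle=0\}$.

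The main obstacle is making this last step precise. The $s_{w\alpha_i}$-symmetry of the chord of $\conv(W\cdot\lambda)$ alone does not pin down where the MV edge sits within the symmetric segment; one must use the MV polytope's rigidity more carefully, for instance by following a reduced-word chain of non-negative-length edges from $\mu_{ws_i}$ up to $\mu_{w_0}=\lambda$ and pairing the accumulated displacement against $w\alpha_i$. Intermediate coroot directions can pair with either sign against $w\alpha_i$, so the containment $P\subset\conv(W\cdot\lambda)$ is essential: it bounds the intermediate vertices inside the weight polytope and thereby controls how much the cumulative pairing can swing negative. The cleanest resolution likely invokes the facet description $\conv(W\cdot\lambda) = \bigcap_{\gamma}\{v : \langle v,\gamma\rangle \ge \langle w_0\lambda,\fund_{|\gamma|}\rangle\}$ applied to the chamber weight $ws_i\fund_i$.
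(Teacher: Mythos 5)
Your reduction is correct: writing $\mu_{ws_i}-\mu_w = c\,w\alpha_i^\vee$ with $c\ge 0$ (this follows from the edge inequality \eqref{eq:nondeg}), and using $M_{w\cdot\fund_j}=\langle\mu_w, w\fund_j\rangle$, one does obtain
$M_{w\cdot\fund_i} - M_{ws_i\cdot\fund_i} = \langle\mu_w, w\alpha_i\rangle + c$,
and the reformulation in terms of the midpoint of the edge is accurate. The gap is the last step, which you acknowledge is not closed. The suggested ``cleanest resolution'' does not work: applying the facet inequality $\langle v, ws_i\fund_i\rangle \ge \langle w_0\lambda, \fund_i\rangle$ for $v\in\conv(W\cdot\lambda)$ to $v=\mu_{ws_i}$ gives $M_{ws_i\cdot\fund_i}\ge M_{w_0\cdot\fund_i}$, which bounds $M_{ws_i\cdot\fund_i}$ from \emph{below}, whereas the goal is an upper bound by $M_{w\cdot\fund_i}$; applying it instead to $v=\mu_w$ yields $\langle\mu_w, w\alpha_i\rangle \le M_{w\cdot\fund_i} - M_{w_0\cdot\fund_i}$, again in the wrong direction. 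The ``reduced-word chain'' alternative has the difficulty you yourself note: writing $\mu_w = \lambda - \sum_l c_l\,w_l^{\wi}\alpha_{\wi_{l+1}}^\vee$ and pairing against $w\alpha_i$ produces a sum whose terms have no fixed sign, and the containment $P\subset\conv(W\cdot\lambda)$ does not in any obvious way control that sum. So there is a genuine gap: the geometric localization of the edge within the symmetric chord of $\conv(W\cdot\lambda)$ is not established.

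The paper's actual proof sidesteps all of this by a quite different mechanism. It chooses a reduced word $\wi$ for $w_0$ with $w_k^{\wi}=w$ and $\wi_{k+1}=i$, and then invokes the result of \cite{K2} that $M_{w_k^{\wi}\cdot\fund_i}-M_{w_{k+1}^{\wi}\cdot\fund_i}$ is one of the entries of the $\wi$-Kashiwara (string) datum of $P(M_\bullet)$; string data are non-negative by construction, so the inequality is immediate. The author's own remark after the lemma --- that it is ``a bit surprising'' that a straightforward statement about BZ data seems to require passing through the Kashiwara datum --- is precisely an acknowledgment that the direct polytope-geometric route you attempted does not close easily. Your argument usefully isolates the exact quantity $\langle\mu_w, w\alpha_i\rangle + c$ that must be shown non-negative, but it does not prove it.
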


\begin{proof}
We may choose a reduced word $ \wi $ for the longest element $ w_0 \in W $ such that for some $ k $, $w_k^\wi = w $ and $ i = \wi_{k+1} $.  By \cite{K2}, the difference $ M_{w_k^\wi \cdot \fund_i} - M_{w_{k + 1}^\wi \cdot \fund_i} $ represents part of the $\wi$-Kashiwara datum for $ P(M_\bullet) $.  In particular this difference is positive.
\end{proof}

This lemma and its proof are a bit surprising.  We have a very straightforward statement about the components of a BZ datum, but its proof relies on interpreting differences of these components as parts of the Kashiwara datum. 

In the case $ G = GL_n $, we can strengthen the lemma.  Let $ \gamma, \delta \in W \cdot \fund_i $ be $ i $ element subsets of $ \{1, \dots, n \}$.  We say that $ \gamma \ge \delta $ if $ \gamma - \delta $ is a sum of positive roots.  This is equivalent to existence of an increasing bijection from $ \gamma \smallsetminus \delta $ to $ \delta \smallsetminus \gamma $.  

\begin{Proposition} \label{th:decr}
Let  $ P(M_\bullet) $ be an MV polytope of coweight $ (\mu, \lambda) $ such that $ P(M_\bullet) \subset \conv \big( W \cdot \lambda \big)$ for some $ \lambda \in \Lambda_+,\ \mu \in \Lambda$. 

Let $ \gamma, \delta \in W \cdot \fund_i $ be such that $ \gamma \ge \delta $.  Then $ M_\gamma \ge M_\delta$.
\end{Proposition}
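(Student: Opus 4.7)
The plan is to reduce the proposition to iterated applications of the previous lemma by interpolating between $\gamma$ and $\delta$ via a chain of single-element swaps. Write $\gamma \setminus \delta = \{a_1 < \cdots < a_r\}$ and $\delta \setminus \gamma = \{b_1 < \cdots < b_r\}$. The hypothesis $\gamma \geq \delta$ forces the unique increasing bijection between these two sets to be $a_s \mapsto b_s$, so $a_s < b_s$ for every $s$. Define a sequence of $i$-subsets $\gamma_0 = \gamma$ and $\gamma_s = (\gamma_{s-1} \setminus \{a_s\}) \cup \{b_s\}$ for $1 \leq s \leq r$, terminating at $\gamma_r = \delta$. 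It then suffices to prove $M_{\gamma_{s-1}} \geq M_{\gamma_s}$ for each $s$.

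For each such step I would realize the swap $\gamma_{s-1} \to \gamma_s$ as a right multiplication by the simple reflection $s_i$. First note that $a_s \in \gamma_{s-1}$ (it has not yet been removed) while $b_s \notin \gamma_{s-1}$ (since $b_s \notin \gamma$ and $b_s \neq b_{s'}$ for $s' < s$). Then choose any $w \in W = S_n$ satisfying $\{w(1), \ldots, w(i)\} = \gamma_{s-1}$, $w(i) = a_s$, and $w(i+1) = b_s$; such a $w$ exists precisely because of the two observations above. By construction $w \cdot \fund_i = \gamma_{s-1}$ and $ws_i \cdot \fund_i = \gamma_s$, while $w(i) = a_s < b_s = w(i+1)$ gives $l(ws_i) > l(w)$. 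The previous lemma therefore yields $M_{\gamma_{s-1}} \geq M_{\gamma_s}$, and chaining along $s = 1, \ldots, r$ produces the desired inequality $M_\gamma \geq M_\delta$.

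The main obstacle is just the combinatorial bookkeeping: confirming that each intermediate $\gamma_s$ is a valid $i$-subset and that the required Weyl element $w$ is always available. This is precisely where the type $A$ hypothesis enters, since the identification of chamber weights with $i$-subsets of $\{1, \ldots, n\}$ and the freedom to prescribe a permutation at two adjacent positions together guarantee that every cover in the subset order can be realized by a length-increasing right multiplication by the single simple reflection $s_i$. No further input from MV theory beyond the previous lemma is needed.
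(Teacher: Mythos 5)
Your proof is correct and follows the same strategy as the paper's: reduce to single-element swaps realized as length-increasing right multiplications by $s_i$, then apply the preceding lemma at each step. The paper states the $r=1$ case and then says "apply the procedure $r$ times"; you spell out the intermediate chain $\gamma_0, \dots, \gamma_r$ and verify the bookkeeping explicitly, which is the only (minor) difference.
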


\begin{proof}
Let $ \{a_1, \dots, a_r \} = \gamma \smallsetminus \delta $ and $ \{b_1, \dots, b_r \} = \delta \smallsetminus \gamma $.  Assume that $a_1 < \dots <  a_r $ and $ b_1 < \dots < b_r $.  By hypothesis we have that $ a_1 < b_1, \dots a_r < b_r $.

First consider the case $ r = 1$, so let $ a = a_1, b = b_1 $.  We may choose $ w \in S_n $ such that $ w(\{1, \dots, i \}) = \gamma $, $ w(i) = a, w(i+1) = b $.  Then since $ a < b $, $ l(ws_i) > l(w) $.  Also by construction $ w \cdot \Lambda_i = \gamma, w s_i \cdot \Lambda_i = \delta $.  So by the lemma we see that $ M_\gamma \ge M_\delta $ as desired.

Now, if $ r > 1$, then we simply apply the above procedure $ r $ times to get a chain of inequalities which shows $ M_\gamma \ge M_\delta$.
\end{proof}

It would be interesting to know if this result carries over to general $ G $.

\section{Proof of the main result}
We now apply this theory to prove our main result.  Everything which follows is specific to $ G = GL_n $.  

It will be convenient for this proof to think of our hives in a ``less symmetric manner''.  We introduce the notation $ \HIVE_{\lambda\mu}^\nu := \HIVE_{\lambda\mu\chi} $, the only difference being that we will read the third edge backwards and hence record the successive differences as $ \nu $.  In particular, we have
\begin{equation*}
\begin{aligned}
&\HIVE_{\lambda\mu}^\nu = \big\{ F \in \Z^{\tri_n} : F \text{ satisfies the rhombus inequalities and } \\
 &F_{n-(k-1), 0, k-1} - F_{n-k, 0, k} = \lambda_k, F_{i-1, n-(i-1), 0} - F_{i, n-i, 0} = \mu_i, \ F_{0, n-(k-1), k-1} - F_{0, n-k, k} = \nu_k \big\} \\
\end{aligned}
\end{equation*}
\subsection{A map from MV polytopes to Hives}
We begin by defining a map $ \Phi : MV_{\lambda\mu}^\nu \rightarrow \HIVE_{\lambda\mu}^\nu $.  We define $ \Phi(M_\bullet) $ to be the hive $ F $ with 
\begin{equation*}
F_{ijk} := M_{ \{k+1, \dots, k+i\} } + \nu_{k+i+1} + \dots + \nu_n.
\end{equation*}

\begin{Proposition} \label{th:isahive}
$\Phi(M_\bullet) $ is actually a hive with boundaries $ \lambda, \mu, \nu $.
\end{Proposition}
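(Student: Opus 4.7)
My plan is to verify separately the boundary values and the three rhombus inequalities. The boundary follows by direct substitution: along the $\lambda$-edge the subsets $\{k+1,\dots,n\}$ equal $w_0\cdot\fund_{n-k}$, so the defining condition $M_{w_0\cdot\fund_i}=\langle\lambda,w_0\cdot\fund_i\rangle$ in $\MV_{\lambda\mu}^\nu$ yields $M_{\{k+1,\dots,n\}}=\lambda_{k+1}+\dots+\lambda_n$, and the successive differences telescope to $\lambda_k$. Along the $\mu$-edge the subsets are $\fund_i$, and $M_{\fund_i}=\langle\nu-\mu,\fund_i\rangle$ combined with the $\nu$-suffix gives $F_{i,n-i,0}=\sum_l\nu_l-\sum_{l\le i}\mu_l$, whose differences are $\mu_i$. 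Along the $\nu$-edge the subset is empty, so $F$ reduces to a pure suffix sum with differences $\nu_k$.

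For the rhombus inequalities, rewrite $F_{ijk}=M_{\{k+1,\dots,k+i\}}+c_{k+i}$ with $c_m:=\nu_{m+1}+\dots+\nu_n$. A direct calculation shows the $c$-contributions cancel exactly in (i) and (ii), while in (iii) they leave a residual $\nu_{k+i}-\nu_{k+i+1}$, which is non-negative by dominance of $\nu$. Each inequality therefore becomes a relation among four ``interval'' subsets of $\{1,\dots,n\}$. For (i), the four subsets are $T$, $T\cup\{k\}$, $T\cup\{k+i\}$, $T\cup\{k,k+i\}$ with $T=\{k+1,\dots,k+i-1\}$; taking $w\in S_n$ with $w(l)=k+l$ for $l\le i-1$, $w(i)=k+i$, $w(i+1)=k$, these become exactly $w\cdot\fund_{i-1}$, $ws_i\cdot\fund_i$, $w\cdot\fund_i$, $w\cdot\fund_{i+1}$, and (i) is then precisely the edge inequality axiom of the BZ datum $M_\bullet$ at $(w,i)$.

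For (ii) and (iii) the four subsets do not match the pattern of any single edge inequality. The plan is to combine an edge inequality at a suitably chosen Weyl group element with the monotonicity bound of Proposition~\ref{th:decr} (which gives $M_\gamma\ge M_\delta$ whenever $\gamma\ge\delta$ in the subset partial order). For (ii), I expect to use an edge inequality at a shifted $w$ together with a monotonicity comparison between two subsets of the same size that differ by translating a single index by $\pm 1$. For (iii), the residual $\nu$-term is absorbed by the slack produced from Proposition~\ref{th:decr}, which works exactly because dominance of $\nu$ supplies the correct sign.

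The main obstacle will be (ii) and (iii): locating the correct Weyl group element $w$ and the correct comparable pair of subsets whose combined edge-plus-monotonicity bound produces the required rhombus inequality with the correct $\nu$-slack in case (iii). Once the right combinatorial identifications are made, each step is a single invocation of a BZ datum axiom or Proposition~\ref{th:decr}, so the difficulty is entirely in the bookkeeping of subsets rather than in any deeper structural argument.
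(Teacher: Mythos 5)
Your handling of the boundary values and of rhombus inequality (i) matches the paper exactly: (i) really is a single edge inequality at the permutation you describe, and the $c$-terms cancel as you say. However, there is a genuine gap in your plan for (ii) and (iii): you propose to deduce them from an edge inequality combined with a monotonicity comparison from Proposition~\ref{th:decr}, but you never invoke the \emph{tropical Pl\"ucker relations}, which are the third axiom of a BZ datum and the ingredient the paper actually uses. Cardinality bookkeeping shows why your plan cannot work as stated: the four subsets appearing in the $M$-part of (ii), namely $\{k+1,\dots,k+i\}$, $\{k+1,\dots,k+i+1\}$, $\{k,\dots,k+i\}$, $\{k+2,\dots,k+i+1\}$, have sizes $i,\,i+1,\,i+1,\,i$, whereas every edge inequality in type $A$ involves subsets of sizes $l-1,\,l,\,l,\,l+1$, and a single application of Proposition~\ref{th:decr} only relates two subsets of \emph{equal} size. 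No choice of $w$ and no single size-preserving monotonicity correction reshuffles an $(l-1,l,l,l+1)$ pattern into an $(i,i+1,i+1,i)$ pattern. The paper instead applies the tropical Pl\"ucker relation at a suitable $(w,i,j)$ — which gives the inequality $M_{\{k,\dots,k+i\}}+M_{\{k+2,\dots,k+i+1\}}\ge M_{\{k,k+2,\dots,k+i+1\}}+M_{\{k+1,\dots,k+i\}}$ (sizes $i{+}1,\,i,\,i{+}1,\,i$, exactly what is needed) — and only then finishes with Proposition~\ref{th:decr} applied to the pair $\{k+1,\dots,k+i+1\}\le\{k,k+2,\dots,k+i+1\}$. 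Inequality (iii) is then the same Pl\"ucker-plus-monotonicity argument, run relative to the polytope $\nu-P(M_\bullet)\subset\conv(W\cdot\mu)$; the leftover $\nu$-term (which by the paper's computation is $\nu_{k+i+1}-\nu_{k+i}$, opposite in sign to what you wrote) is handled directly by dominance of $\nu$ and is not ``absorbed'' by slack from Proposition~\ref{th:decr}. So the missing idea is the tropical Pl\"ucker relation; without it, inequalities (ii) and (iii) do not follow from the tools you list.
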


\begin{proof}
First, we check the boundary values.
We have $ F_{n-k, 0, k} =  M_{ \{k+1, \dots, n\} }$.  But by the first condition from (\ref{eq:defMV}), we have that $$ M_{ \{k+1, \dots, n\} } = \langle \lambda, w_0 \cdot \Lambda_{n-k} \rangle = \lambda_{k+1} + \dots + \lambda_n. $$  Hence the boundary condition holds along the $\lambda$ edge.

We also have $ F_{i, n-i, 0} = M_{ \{1, \dots, i \} } + \nu_{i+1} + \cdots + \nu_n$.  Using the second condition from (\ref{eq:defMV}), we see that this means that $$ F_{i, n-i, 0} = \nu_1 - \mu_1 + \dots + \nu_i - \mu_i +  \nu_{i+1} + \cdots \nu_n $$ and hence that $ F_{i-1, n-(i-1), 0} - F_{i, n-i, 0} = \mu_i $ as desired.

Finally $ F_{0, n-k, k} = \nu_{k+1} + \dots +\nu_n $ and so the $ \nu $ boundary condition holds as well.

Next we check the rhombus inequalities.
We have 
\begin{equation*}
\begin{aligned}
F_{i, j, k} &+ F_{i, j+1 ,k-1} - (F_{i+1,j , k-1} + F_{i-1, j+1, k}) \\
&= M_{ \{k+1, \dots, k+i\} } + M_{ \{k, \dots, k+i -1 \} } - M_{ \{k, \dots, k+i\} } - M_{ \{k+1, \dots, k+i -1\} }
\end{aligned}
\end{equation*}
and the right hand side is nonpositive by the non-degeneracy inequality and hence the first rhombus inequality (1.i) holds.

For the second rhombus inequality,
\begin{equation*}
\begin{aligned}
F_{i,j,k} &+ F_{i+1, j-1, k} -( F_{i+1, j, k-1} + F_{i, j-1, k+1}) \\
&= M_{ \{k+1, \dots, k+i\} } + M_{ \{k+1, \dots, k+i+1\} } - M_{ \{k, \dots, k+i\} } - M_{\{k+2, \dots, k+i+1\} }.
\end{aligned}
\end{equation*}
Now by the tropical Pl\"ucker relation, we see that
\begin{equation*}
M_{ \{k, \dots, k+i \} } + M_{ \{k+2, \dots, k+i+1\} } \ge M_{ \{ k, k+2, \dots, k+i+1 \} } + M_{ \{k+1 \dots, k+i\} }
\end{equation*}
(in particular (\ref{eq:A2trop}) gives us that the RHS is the min of two terms, one of which is the LHS).  Hence
\begin{equation*}
\begin{gathered}
M_{ \{k+1, \dots, k+i \} } + M_{ \{k+1, \dots, k+i+1 \} } - M_{ \{k, \dots, k+i\} } - M_{ \{k+2, \dots, k+i+1 \} }\\ 
\le M_{ \{k+1, \dots, k+i \} } + M_{ \{k+1, \dots, k+i+1\} } - M_{\{k, k+2, \dots, k+i+1\} } - M_{ \{k+1, \dots, k+i\} } \le 0
\end{gathered}
\end{equation*}
where the last inequality follows from Proposition \ref{th:decr} applied to the pair $ \{k+1, \dots, k+i+1 \} \le \{k, k+2, \dots, k+i+1 \} $.  Hence, we see that the second rhombus inequality (1.ii) holds.

Finally,
\begin{equation*}
\begin{gathered}
F_{i, j, k} + F_{i+1, j, k-1} - ( F_{i, j+1, k-1} + F_{i+1, j-1, k}) \\
= M_{\{ k+1, \dots, k+i\} } + M_{ \{k, \dots, k+i\} } - M_{ \{k, \dots, k+i -1\} } - M_{ \{k, \dots, k+i+1\} } + \nu_{k+i+1} - \nu_{k+i}.
\end{gathered}
\end{equation*}
By the same argument as above (except using that $ P(M_\bullet) \subset \nu - \conv(W \cdot \mu) $), we also see that this expression is non-positive.
\end{proof}

The definition of $ \Phi $ may look a bit ad-hoc, but it is actually a composition of some well-known bijections and inclusions.  First, we take the $ \wi$-Lusztig datum of the MV polytope with respect to the reduced word $ 1 \cdots n-1 1 \cdots n-2 \cdots 1 $ (see \cite{K1}).  Then, we use this Lusztig datum to construct a Gelfand-Tsetlin pattern.  Finally we use this Gelfand-Tsetlin pattern to produce a hive following a well-known construction (see \cite{BZ} or for example \cite{us}).

The map $ \Phi $ is in fact a bijection, since it is clearly injective and we know from Theorem \ref{th:numhives} and Corollary \ref{th:numMV} that $ \HIVE_{\lambda\mu}^\nu $ and $ \MV_{\lambda\mu}^\nu $ each have size $ c_{\lambda\mu\chi}$.  Alternatively it is possible to write down an inverse map, but it is a bit involved to check that the resulting BZ datum satisfies the third and fourth conditions of (\ref{eq:defMV}).

\subsection{The components of the fibres}
Recall the function $ H $ defined in section \ref{se:mainthm}.  First note that $ H $ is a well defined function on $ \Gr_{\lambda\mu\nu} $.

\begin{Proposition}
Let $ M_\bullet \in \MV_{\lambda\mu}^\nu $.  The function $ H$ is constant on $ A(M_\bullet) \times \{t^\nu \} \times \{L_0 \}$ and its value there is $ \Phi(M_\bullet) $.
\end{Proposition}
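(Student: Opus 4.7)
The plan is to compute $(g_1, t^\nu, 1) \cdot \xi_{ijk}$ directly using the expansion
\begin{equation*}
\xi_{ijk} = \sum_{I \sqcup Q \sqcup R = [n]} \epsilon(I, Q, R) \, e_I \otimes e_Q \otimes e_R,
\end{equation*}
summed over ordered partitions of $[n]$ into subsets of sizes $(i, j, k)$, where $\epsilon(I, Q, R) = \pm 1$ is the shuffle sign. Applying $(g_1, t^\nu, 1)$ and using $t^\nu \cdot e_Q = t^{\nu(Q)} e_Q$ (with $\nu(Q) := \sum_{q \in Q} \nu_q$) yields
\begin{equation*}
(g_1, t^\nu, 1) \cdot \xi_{ijk} = \sum_{(Q, R)} t^{\nu(Q)} \, \epsilon(I, Q, R) \, (g_1 \cdot e_{I(Q, R)}) \otimes e_Q \otimes e_R,
\end{equation*}
where $I(Q, R) := [n] \setminus (Q \sqcup R)$. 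Since $\{e_Q \otimes e_R\}_{(Q, R)}$ is a basis of $\Lambda^j \C^n \otimes \Lambda^k \C^n$, distinct $(Q, R)$ blocks cannot cancel, so
\begin{equation*}
H_{ijk}([g_1], t^\nu, L_0) = \min_{(Q, R)} \bigl[ \nu(Q) + \D_{I(Q, R)}([g_1]) \bigr].
\end{equation*}
On $A(M_\bullet)$ each $\D_\gamma$ takes the constant value $M_\gamma$, so $H$ is constant on $A(M_\bullet) \times \{t^\nu\} \times \{L_0\}$ with value $\min_{(Q, R)} [\nu(Q) + M_{I(Q, R)}]$; this settles the constancy claim.

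To identify the minimum with $\Phi(M_\bullet)_{ijk} = M_{I_0} + \nu_{k+i+1} + \cdots + \nu_n$, where $I_0 = \{k+1, \ldots, k+i\}$, I would show it is attained at $(I_0, Q_0, R_0)$ with $Q_0 = \{k+i+1, \ldots, n\}$, $R_0 = \{1, \ldots, k\}$ in two exchange steps. First, an $I$-$R$ swap replacing some $i \in I$ with a larger $r \in R$ weakly decreases the cost by Proposition \ref{th:decr} applied to $P(M_\bullet) \subset \conv(W \cdot \lambda)$, and a $Q$-$R$ swap replacing $q \in Q$ with a larger $r \in R$ weakly decreases the cost by dominance of $\nu$; iterating both types forces $R$ to be the bottom $k$ elements of $[n]$, so $R = R_0$. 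Second, with $R = R_0$ fixed, the task reduces to proving $M_{I_0} - \nu(I_0) \leq M_I - \nu(I)$ for every $i$-subset $I \subset \{k+1, \ldots, n\}$. Since $\alpha([n])$ is constant on $P(M_\bullet)$, a direct calculation rephrases this via the reflected polytope $P' := \nu - P(M_\bullet)$ and its BZ datum $M'$ as $M'_{I_0^c} \leq M'_{I^c}$, where $I^c := [n] \setminus I$. Because $Q_0$ is the top $j$-subset of $\{k+1, \ldots, n\}$ in the partial order, one checks directly that $I^c \geq I_0^c$, and Proposition \ref{th:decr} applied to $P'$ (satisfying $P' \subset \conv(W \cdot \mu)$ by the hypothesis $P(M_\bullet) \subset \nu - \conv(W \cdot \mu)$) then gives the required inequality.

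The main obstacle in this route is justifying that $P' = \nu - P(M_\bullet)$ is itself an MV polytope of the appropriate coweight, so that Proposition \ref{th:decr} may be invoked for it. This should follow from a standard symmetry of the theory of MV polytopes, combining reflection with translation by $t^\nu$, but demands a careful check at the level of BZ data; an alternative is to prove the analogue of Proposition \ref{th:decr} directly under the boundary condition $P(M_\bullet) \subset \nu - \conv(W \cdot \mu)$ by reading off Kashiwara parameters along the reverse reduced word.
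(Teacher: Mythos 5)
Your proposal is correct and follows essentially the same route as the paper's: both expand $\xi_{ijk}$ to obtain a minimization over ordered partitions of $\{1,\dots,n\}$ and then invoke Proposition \ref{th:decr}, first for $P(M_\bullet) \subset \conv(W\cdot\lambda)$ and then for the reflected polytope $\nu - P(M_\bullet) \subset \conv(W\cdot\mu)$, to pin down the minimizer at $(\{k+1,\dots,k+i\}, \{k+i+1,\dots,n\}, \{1,\dots,k\})$. The concern you raise about $\nu - P(M_\bullet)$ being an MV polytope is fair, but the paper makes the identical assertion (writing its BZ datum as $M'_\delta = M_{-\delta} + \langle\nu,\delta\rangle$) without further justification, so your argument matches the paper's level of rigor on that point.
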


\begin{proof}
First, note that we can write 
\begin{equation*}
\xi_{ijk} = \sum_{\alpha, \beta, \gamma \ : \ \alpha \cup \beta \cup \gamma = \{1, \dots, n \}} \pm \ v_\alpha \otimes v_\beta \otimes v_\gamma 
\end{equation*}

Let $[g] \in A(M_\bullet) $.  Then
\begin{align*} \label{eq:simple}
H_{ijk}([g], t^\nu, 1) &= \val \Biggl( \sum_{\alpha, \beta, \gamma \ : \ \alpha \cup \beta \cup \gamma = \{1, \dots, n \}} \pm \ g \cdot v_\alpha \otimes t^\nu \cdot v_\beta \otimes v_\gamma \Biggr)\\
&= \min_{\alpha, \beta \ : \ \alpha \cap \beta = \emptyset} M_\alpha + \langle \nu, \beta \rangle
\end{align*}
where we use the fact that if $ v_1, \dots, v_l \in W_{ijk} \otimes \K $ are linearly independent, then $ \val(v_1 + \dots + v_l) = \min( \val(v_1), \dots, \val(v_l)) $.

To analyze the resulting min expression, fix $ \alpha $ for the moment.  Since $ \nu $ is dominant, we have that $ \langle \nu, \beta \rangle \ge \langle \nu, \{ r, \dots, n \} \smallsetminus \alpha \rangle $ (here $ r $ is chosen so that $ \{r, \dots, n \} \smallsetminus \alpha $ has size $ j$).  Hence we may assume that $ \beta \subset \{k+1, \dots, n \} $.

Now, $ \alpha \ge \{ k +1, \dots, n \} \smallsetminus \beta $ and so $ M_\alpha \ge M_{ \{k+1, \dots, n\} \smallsetminus \beta} $ by Proposition \ref{th:decr}.  Hence we may assume that $ \alpha = \{k+1, \dots, n\} \smallsetminus \beta $.

Now we apply a similar trick, except using that $ \nu - P(M_\bullet) \subset \conv( W \cdot \mu) $.  Let $ \delta = \{1, \dots, k, k+i+1, \dots, n \} $.  Then $ -\alpha \ge \delta $.  Now, $ \nu -P(M_\bullet) $ is an MV polytope with BZ datum $ M'_\delta = M_{-\delta} + \langle \nu, \delta \rangle $.  Hence by Proposition \ref{th:decr},
\begin{equation*}
\begin{aligned}
M'_{-\alpha} &\ge M'_\delta\\
\Rightarrow M_{\alpha} + \langle \nu, -\alpha \rangle &\ge M_{-\delta} + \langle \nu, \delta \rangle \\
\Rightarrow  M_\alpha + \langle \nu, \beta \rangle &\ge M_{\{k+1, \dots, k+i\}} + \langle \nu, \{k+i+1, \dots, n\} \rangle 
\end{aligned}
\end{equation*}
and hence we see that to achieve the minimum, we should take $ \alpha = \{k+1, \dots, k+i\} $ and $\beta = \{k+i+1, \dots, n \}$.  In other words, we have $ H_{ijk}(L, t^\nu, L_0) = M_{\{k+1, \dots, k+i\}} + \nu_{k+i+1} + \dots + \nu_n $ for all $ L \in A(M_\bullet)$, as desired.
\end{proof}

\begin{proof}[Proof of Theorem \ref{th:main}]
Let $ X $ be a component of $ \Gr_{\lambda \mu \chi} $.  Then by the results of section 2, $$ \overline{X} \cap \Gr_\lambda \times \{t^\nu\} \times \{L_0 \} = \overline{A(M_\bullet)} \times \{t^\nu \} \times \{L_0 \} $$ for some BZ datum $ M_\bullet \in \MV_{\lambda \mu}^\nu $.  Since $ H $ is $ G(\mathcal{O}) $ invariant, the generic value of $ H $ on $ X $ equals its generic value on this intersection and hence its generic value on $ A(M_\bullet) \times \{t^\nu \} \times \{L_0 \}$.  By the above proposition, this value is $ \Phi(M_\bullet) $ which is a hive by Proposition \ref{th:isahive}.  Hence the generic value of $ H $ on any component is a hive with boundary values $ \lambda, \mu, \chi$.  So we get a map from the set of components to the set of hives.

To see that this gives a bijection, we just note that this map is a composition of the bijections
\begin{equation*}
\Comp(\Gr_{\lambda\mu\chi}) \rightarrow \Comp(m_{\lambda\mu}^{-1}(\nu)) \rightarrow \MV_{\lambda \mu}^\nu \rightarrow \HIVE_{\lambda \mu \chi}.
\end{equation*}
\end{proof}

\section{A conjectural generalization}
\subsection{The variety of $k$-gons}
We will now study the variety of $k$-gons in $ \Gr $.  Let $ \lambda^1, \dots, \lambda^k \in \Lambda_+ $ be $ k $ dominant coweights.  Then we define
\begin{equation*}
\Gr_{\lambda^1 \cdots \lambda^k} = \{ (L_1, \dots, L_k) \in \Gr^k : L_0 \overset{\lambda^1}{\edge} L_1 \overset{\lambda^2}{\edge} \cdots \overset{\lambda^{k-1}}{\edge} L_{k-1} \overset{\lambda^k}{\edge} L_k, \text{ and } L_0 = L_k \}
\end{equation*}
This is the variety of geodesic $k$-gons in the Bruhat-Tits building with side lengths $ \lambda^1, \dots, \lambda^k $, all vertices special, and first vertex $ L_0 $.

As before, from the geometric Satake correspondence we have the following result.
\begin{Theorem}
The number of components of $  \Gr_{\lambda^1 \cdots \lambda^k} $ of dimension $ \langle \rho, \lambda^1 + \dots + \lambda^k \rangle $ equals the dimension of $ \big( V_{\lambda^1} \otimes \cdots \otimes V_{\lambda^k})^{G} $.  
\end{Theorem}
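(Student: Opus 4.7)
The plan is to realize $\Gr_{\lambda^1\cdots\lambda^k}$ as a single fibre of an iterated convolution morphism and then invoke geometric Satake, in direct parallel with how Theorem 1.1 yields Theorem 1.2. First I would define the iterated twisted product
\[
\Gr_{\lambda^1}\widetilde\times\cdots\widetilde\times\Gr_{\lambda^k} := \{(L_1,\dots,L_k)\in\Gr^k : L_0 \overset{\lambda^1}{\edge} L_1 \overset{\lambda^2}{\edge}\cdots\overset{\lambda^k}{\edge} L_k\},
\]
equipped with its convolution morphism $m:(L_1,\dots,L_k)\mapsto L_k$. This variety is an iterated $\Gr_{\lambda^i}$-bundle, hence smooth of dimension $2\langle\rho,\lambda^1+\cdots+\lambda^k\rangle$, and by construction $\Gr_{\lambda^1\cdots\lambda^k}=m^{-1}(L_0)$. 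So the theorem reduces to counting the top-dimensional components of this single fibre.

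Next I would invoke the convolution form of geometric Satake iterated $k-1$ times: since $m_*$ of the IC sheaf of a two-fold convolution corresponds to a tensor product of two irreducibles, associativity of convolution and induction give that $m_* IC(\overline{\Gr_{\lambda^1}\widetilde\times\cdots\widetilde\times\Gr_{\lambda^k}})$ corresponds under Satake to $V_{\lambda^1}\otimes\cdots\otimes V_{\lambda^k}$. Decomposing this representation into isotypic components and matching on the sheaf side, the multiplicity of the skyscraper $IC_{\Gr_0}$ inside the pushforward equals the multiplicity of the trivial representation $V_0$ in the tensor product, which is $\dim(V_{\lambda^1}\otimes\cdots\otimes V_{\lambda^k})^G$.

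To convert this multiplicity into the desired component count, I would use the semi-smallness of the iterated convolution morphism. Semi-smallness ensures that $m_* IC$ is perverse, and that the multiplicity of $IC_{\Gr_0}$ in $m_* IC$ equals the number of irreducible components of $m^{-1}(L_0)$ of maximal dimension $\tfrac{1}{2}\dim\bigl(\Gr_{\lambda^1}\widetilde\times\cdots\widetilde\times\Gr_{\lambda^k}\bigr)=\langle\rho,\lambda^1+\cdots+\lambda^k\rangle$, which is precisely the statement of the theorem.

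The main obstacle is verifying semi-smallness and the equi-dimensionality of the top components of the iterated fibre. For the two-fold convolution this is the content of the Mirkovi\'c--Vilonen dimension bound together with the Haines result cited after Theorem 1.1. For $k\geq 3$ one has to iterate: using $G(\mathcal{O})$-equivariance, the fibre of $m_{\lambda^1\cdots\lambda^k}$ over $L_0$ can be analyzed by fixing an intermediate vertex $L_j$ in some orbit $\Gr_\nu$ and presenting it as a $\Gr_{\lambda^1\cdots\lambda^j}\widetilde\times\Gr_{\lambda^{j+1}\cdots\lambda^k}$-style fibre, so that the two-fold dimension estimates cascade along the induction and every top component of the whole fibre projects to a top component of each smaller piece.
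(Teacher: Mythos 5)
Your proposal is correct and is essentially the argument the paper has in mind: the paper gives no written proof for this theorem, saying only ``As before, from the geometric Satake correspondence we have the following result,'' which is precisely the iterated-convolution, Satake-pushforward, semi-smallness argument you spell out. The only thing the paper adds that you also correctly flag is the appeal to Haines \cite{H} for equidimensionality of the fibre components in the $GL_n$ case.
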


As before, in the case $ G = GL_n $, Haines \cite{H} has recently shown that all components of this variety are of this dimension.

\subsection{The $ k$-hives}
Now we will describe a generalization of hives which is due to Knutson-Tao-Woodward \cite{KTW} in the case when $ k =4$ and to A. Henriques in the general case (personal communication).  We consider the set 
\begin{equation*}
\Delta_n^k  := \{ (i_1, \dots, i_k) \in \mathbb{N}^k : i_1 + \cdots i_k = n \}
\end{equation*}

We say that a function $ F : \Delta_n^k \rightarrow \mathbb{Z} $ satisfies the \textbf{octahedron recurrence} if for any $ v = (v_1, \dots, v_k) \in \N^k$ such that $ v_1 + \cdots + v_k = n-2 $ and for any $  i < j < r < s $ (in the cyclic order on \{1, \dots, k\}), we have
\begin{equation} \label{eq:oct}
\min \big( F_{ v + e_i + e_s} + F_{v + e_j + e_r}, F_{v + e_i + e_j} + F_{v + e_r + e_s} \big) = F_{v + e_i + e_r} + F_{v + e_j + e_s} 
\end{equation}
where $ e_i $ is the vector $ (0, \dots, 1, \dots, 0) $ with a 1 in the $ i$th position and 0s elsewhere.  The name ``octahedron recurrence'' comes from the $ n= 4 $ case where $ \Delta_n^k $ is the set of integer points in a tetrahedron of size $ k$ and we have one condition (\ref{eq:oct}) for each unit octahedron in $ \Delta_n^k $.

As before, two functions are considered equivalent if their difference is constant.  A $k$-\textbf{hive} is an equivalence class of functions $ F $ which restrict to hives on all of their 2-faces and satisfy the octahedron recurrence.

The boundary value of a hive is $ \lambda^1, \dots, \lambda^k $ where $ \lambda^i_j = F_{0,\dots, j-1, n-j+1, \dots 0} - F_{0, \dots, j, n-j, \dots, 0} $, so $ \lambda^i $ records the successive differences along the edge from $ ne_{i-1} $ to $ ne_i $.  

The case of 4-hives has studied by Knutson-Tao-Woodward.
\begin{Theorem}[\cite{KTW}]
The number of 4-hives with boundary values $\lambda^1, \lambda^2, \lambda^3, \lambda^4$ equals the dimension of $ (V_{\lambda^1} \otimes V_{\lambda^2} \otimes V_{\lambda^3} \otimes V_{\lambda^4})^G$
\end{Theorem}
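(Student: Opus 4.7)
The plan is to reduce to the 3-hive theorem (Theorem~\ref{th:numhives}) by decomposing each 4-hive along its interior diagonal edge into a compatible pair of 3-hives, mirroring the tensor-product decomposition of the four-fold invariant space. On the representation side,
\begin{equation*}
\dim\big(V_{\lambda^1} \otimes V_{\lambda^2} \otimes V_{\lambda^3} \otimes V_{\lambda^4}\big)^G = \sum_{\nu \in \Lambda_+} c_{\lambda^1 \lambda^2 \nu^\vee}\, c_{\nu \lambda^3 \lambda^4},
\end{equation*}
which follows by decomposing $V_{\lambda^1} \otimes V_{\lambda^2} = \bigoplus_\nu V_\nu^{\oplus c^\nu_{\lambda^1 \lambda^2}}$ and noting that $c^\nu_{\lambda^1 \lambda^2} = c_{\lambda^1 \lambda^2 \nu^\vee}$. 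By Theorem~\ref{th:numhives}, it then suffices to construct a bijection between 4-hives with boundary $(\lambda^1, \dots, \lambda^4)$ and pairs of 3-hives with boundaries $(\lambda^1, \lambda^2, \nu^\vee)$ and $(\nu, \lambda^3, \lambda^4)$, summed over $\nu \in \Lambda_+$.

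The forward map is immediate. Label the vertices of $\Delta_n^4$ as $v_1, v_2, v_3, v_4$ so that $\lambda^i$ lies on the edge $v_{i-1} v_i$ (indices mod $4$). The diagonal $v_1 v_3$ carries a sequence of successive differences; summing the rhombus inequalities along it inside either of the two 2-faces containing it shows this sequence is weakly decreasing, hence is a dominant coweight $\nu \in \Lambda_+$. The restrictions of the 4-hive to the 2-faces $v_1 v_2 v_3$ and $v_1 v_3 v_4$ are then 3-hives (by the very definition of a 4-hive) with the advertised boundaries; the $\nu$ versus $\nu^\vee$ asymmetry reflects only that the diagonal is traversed in opposite orientations by the two hives.

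For the inverse, given a matching pair of 3-hives, I would extend them across the interior of $\Delta_n^4$ via the octahedron recurrence~(\ref{eq:oct}): each newly-reached lattice point is the missing vertex of a unit octahedron whose other five vertices are already assigned, and~(\ref{eq:oct}) determines its value. The main obstacle is verifying that the propagated function is a genuine 4-hive --- that the octahedron recurrence holds on \emph{every} unit octahedron and, more importantly, that the rhombus inequalities hold on the two remaining 2-faces $v_1 v_2 v_4$ and $v_2 v_3 v_4$. I would address this by realising the extension as a tropicalisation of Speyer's determinantal construction: for $(g_1, g_2, g_3, g_4) \in GL_n(\C\{t\})^4$ the function
\begin{equation*}
(i_1, i_2, i_3, i_4) \mapsto \val\!\Big([x_1^{i_1} x_2^{i_2} x_3^{i_3} x_4^{i_4}] \det(x_1 g_1 + x_2 g_2 + x_3 g_3 + x_4 g_4)\Big)
\end{equation*}
automatically satisfies~(\ref{eq:oct}) and all rhombus inequalities on every 2-face, since the Desnanot--Jacobi and Pl\"ucker identities among minors of $x_1 g_1 + \cdots + x_4 g_4$ imply these relations upon applying $\val$. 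A realisability argument --- showing that every matching pair of 3-hives is the Speyer output of some $(g_1, \dots, g_4)$ on the two seed 2-faces --- then identifies the octahedron-recurrence extension with a Speyer function, proving it is a 4-hive and completing the bijection.
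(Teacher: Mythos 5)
The paper does not prove this theorem; it cites it from \cite{KTW}, so the comparison is against the Knutson--Tao--Woodward argument. Your overall strategy — cut along the diagonal $v_1 v_3$, match 4-hives with pairs of 3-hives that share that diagonal as a common boundary, invoke Theorem~\ref{th:numhives} and the identity $\dim(V_{\lambda^1}\otimes V_{\lambda^2}\otimes V_{\lambda^3}\otimes V_{\lambda^4})^G = \sum_\nu c_{\lambda^1\lambda^2\nu^\vee}\, c_{\nu\lambda^3\lambda^4}$, and use the octahedron recurrence to define the inverse — is exactly the framework of \cite{KTW}. The forward map and the counting reduction are fine.

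The gap is in the inverse map, precisely where you flag the main obstacle. You need that the octahedron-recurrence extension of any matching pair of 3-hives satisfies the rhombus inequalities on the two remaining faces, and your proposed proof of this via a Speyer-type determinantal realization is unsubstantiated in two serious ways. First, the realizability claim — that every matching pair of 3-hives is the valuation pattern of $(x_1,\dots,x_4)\mapsto \det(x_1 g_1 + \cdots + x_4 g_4)$ on the two seed faces for some $(g_1,\dots,g_4)$ — is not proved and is of comparable difficulty to the theorem itself (Speyer's realizability result is for a single triangle, not for compatible pairs, and requires control over cancellation in the leading terms). Second, the assertion that this determinantal function ``automatically'' satisfies the octahedron recurrence and all rhombus inequalities is essentially what the present paper leaves open as Conjecture~\ref{conj}: the author explicitly offers only the tropicalization of the identity among minors as \emph{supporting evidence}, not a proof, precisely because passing from the exact algebraic identity to a tropical equality requires a genericity (no-cancellation) argument that is not supplied. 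If that step were as routine as you suggest, the conjecture would be a theorem. The actual \cite{KTW} proof is purely combinatorial and predates Speyer's work: they show directly that the octahedron-recurrence propagation preserves the rhombus inequalities across the tetrahedron, with no appeal to determinantal representations or tropicalization. You should replace the Speyer appeal with a direct combinatorial verification along the lines of \cite{KTW}, or else establish the realizability and genericity facts independently.
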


The general result that $k$-hives count tensor product invariants seems to be known to experts, though no proof appears in the literature.  A combinatorial model related to $k$-hives has been developed by A. Postnikov (personal communication).

A ``non-tropical'' version of the octahedron recurrence (\ref{eq:oct}) appears in the work of Fock-Goncharov \cite{FG}, where it describes relations between coordinates on the product of $k $ copies of the base affine variety for $ GL_n$.  

\subsection{Components of the variety of $k$-gons and $k$-hives}
For $ (i_1, \dots, i_k) \in \Delta_n^k $, we define $ \xi_{i_1 \cdots i_k} $ to be a basis vector for the copy of the determinant representation inside $ V_{\fund_{i_1}} \otimes \cdots \otimes V_{\fund_{i_k}} $.  

Define the function
\begin{equation*}
H_{i_1 \cdots i_k}([g_1], \dots, [g_k]) := \val \big( (g_1, \dots ,g_k) \cdot \xi_{i_1 \cdots i_k} \big).
\end{equation*}
The general setup of section \ref{se:fun} applies to show that $ H_{i_1 \cdots i_k} $ is a well-defined function $ Gr^k \rightarrow \Z $.  We will consider the restriction of $ H $ to the subvariety $ \Gr_{\lambda^1 \cdots \lambda^k} $.

\begin{Conjecture} \label{conj}
The generic value of $ H $ on each component of $ \Gr_{\lambda^1 \cdots \lambda^k} $ is a $k$-hive and this gives a bijection between the set of components of $ \Gr_{\lambda^1 \cdots \lambda^k} $ and the set of $k$-hives with boundary values $ \lambda^1, \dots, \lambda^k$.
\end{Conjecture}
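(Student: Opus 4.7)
The plan is to follow the strategy of Theorem \ref{th:main}: describe the components of $\Gr_{\lambda^1 \cdots \lambda^k}$ combinatorially, map them to $k$-hives via the generalization of $\Phi$, and check that this map agrees with the generic value of $H$ and is a bijection. First, using the $G(\mathcal{O})$-action exactly as in Section 2.2, one reduces to the subvariety where $L_k = L_0$ and $L_{k-1} = t^\nu$ for $\nu = (\lambda^k)^\vee$. Fix a \emph{fan triangulation} of the $k$-gon based at $L_0$, with triangles $(L_0, L_i, L_{i+1})$ for $1 \le i \le k-2$. On a generic component of $\Gr_{\lambda^1 \cdots \lambda^k}$, the diagonal distances $L_0 \to L_i$ stabilize to dominant coweights $\kappa^i \in \fund_+$, and each triangle becomes a 3-gon variety $\Gr_{\kappa^{i-1} \lambda^{i+1} (\kappa^i)^\vee}$ (with $\kappa^0 = \lambda^1$ and $\kappa^{k-2} = \nu$) to which Theorem \ref{th:main} applies, producing a classical hive on the 2-face of $\Delta_n^k$ with vertices $n e_k, n e_i, n e_{i+1}$. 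Varying the base vertex of the fan yields classical hives on all $\binom{k}{3}$ two-faces of $\Delta_n^k$.

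The essential new content beyond the $k=3$ case is the octahedron recurrence (\ref{eq:oct}). The natural route is a tropical Plücker identity: using the exterior multiplication maps $V_{\fund_a} \otimes V_{\fund_b} \to V_{\fund_{a+b}}$ coordinate-wise, the products $\xi_{v+e_i+e_r} \cdot \xi_{v+e_j+e_s}$, $\xi_{v+e_i+e_s} \cdot \xi_{v+e_j+e_r}$, and $\xi_{v+e_i+e_j} \cdot \xi_{v+e_r+e_s}$ can be viewed as elements of a common ambient representation in which they should satisfy a three-term algebraic identity
\begin{equation*}
\xi_{v + e_i + e_r} \cdot \xi_{v + e_j + e_s} = \pm\, \xi_{v + e_i + e_s} \cdot \xi_{v + e_j + e_r} \pm\, \xi_{v + e_i + e_j} \cdot \xi_{v + e_r + e_s}.
\end{equation*}
This is essentially the non-tropical octahedron relation appearing in \cite{FG} for products of base affine varieties of $GL_n$. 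Applying $\val$ to both sides at a generic point of a component, and arguing --- in the style of the generic-value computation in Section 3 and the monotonicity argument of Proposition \ref{th:decr} --- that generically one of the two right-hand terms achieves a strictly smaller valuation (so that no cancellation can occur), one recovers the tropical octahedron recurrence (\ref{eq:oct}).

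For bijectivity, injectivity of the generic-value map follows by restricting to any fan triangulation and applying the injectivity half of Theorem \ref{th:main} to each constituent triangle, so that different components yield different tuples of classical hives. Surjectivity is then equivalent to equality of cardinalities, assuming the combinatorial fact that the number of $k$-hives with boundary $\lambda^1, \ldots, \lambda^k$ equals $\dim (V_{\lambda^1} \otimes \cdots \otimes V_{\lambda^k})^G$ --- established by \cite{KTW} for $k = 4$ and folklore in general. The principal obstacle is making the Plücker identity of the previous paragraph precise enough for the valuation argument: pinning down the signs, choosing the correct common ambient representation, and verifying generic non-cancellation of the two right-hand terms on a top-dimensional component. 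While the underlying cluster-algebraic structure from \cite{FG} and the work of Speyer is well understood, translating it into a clean statement compatible with $H$ and $\xi_{i_1 \cdots i_k}$ as defined here appears to require real work. A secondary obstacle is rigorously establishing the $k$-hive counting formula for $k \ge 5$, without which the counting argument for surjectivity cannot be closed.
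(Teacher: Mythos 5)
The statement you are attempting to prove is labelled a \emph{Conjecture} in the paper, and the paper offers no proof of it; the author gives only one sentence of supporting evidence, namely that the octahedron recurrence (\ref{eq:oct}) should arise as the tropicalization of a minor identity of the type appearing in Fock--Goncharov \cite{FG}, in analogy with how the tropical Pl\"ucker relations were obtained in \cite{K1}. So there is no ``paper's own proof'' to compare against, and you should not present this as something that can be deduced from the paper's results.

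That said, your sketch does correctly track the author's suggested line of attack, and you are candid about where it breaks down --- which is exactly where the conjecture remains open. Two of the gaps you name are genuine and not merely technical. First, the putative three-term identity among the $\xi$'s is stated only schematically: the ``common ambient representation'' is unspecified, the signs are undetermined, and the key claim that generically one of the two right-hand terms has strictly smaller valuation is not justified (it is not even clear this is the right statement --- when the two valuations coincide you would additionally need to rule out cancellation of leading coefficients, which requires real information about a generic point of each component). Applying $\val$ to an identity $a = \pm b \pm c$ only yields $\val(a) \ge \min(\val(b), \val(c))$ for free; the equality in (\ref{eq:oct}) is the whole content and is not delivered by the argument. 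Second, the fan-triangulation reduction has an unaddressed coherence issue: a component that is generic for the fan based at one vertex need not be generic for a fan based at another, and the hives on overlapping 2-faces obtained from different fans are not a priori restrictions of a single function on $\Delta_n^k$ --- reconciling them is again essentially the octahedron recurrence, so this step is circular as stated. Finally, as you note, the counting formula for $k$-hives with $k \ge 5$ is not established in the literature, so the surjectivity-by-counting step cannot be closed. In short, your proposal is a reasonable reconstruction of the intended strategy but it is not a proof, and the paper itself does not claim one.
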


As supporting evidence for this conjecture, let us mention that the equation (\ref{eq:oct}) can be see as the tropicalization of a equation involving minors of matrices in $ G(\K) $ (the same minors as in \cite{FG}).  A similar observation lead to the tropical Plucker relations in \cite{K1}.

\subsection{An application of the conjecture}
There is an interesting application of the $ k=4$ case of the conjecture (this is the first open case).  As shown in \cite{KTW}, looking at the faces of all 4-hives with boundary $ \lambda, \mu, \nu, \chi $, gives a bijection
\begin{equation*}
\bigcup_\delta \HIVE_{\lambda\delta}^\chi \times \HIVE_{\mu\nu}^\delta \stackrel{\sim}{\longrightarrow}
\bigcup_\gamma \HIVE_{\lambda\mu}^\gamma \times \HIVE_{\gamma\nu}^\chi
\end{equation*}
In \cite{us}, we showed that this bijection realizes the associativity constraint in the category of $ \mathfrak{gl}_n$-crystals.  Combining Conjecture \ref{conj} with an extension of the work of Braverman-Gaitsgory \cite{BG}, would allow one to reprove this result in a geometric manner.  This would be a improvement over the current proof which proceeds combinatorially via the theory of Young Tableaux.

\end{document}